\newif\ifpictures
\newcommand{\bR}{\mathbb{R}}
\newcommand{\R}{\mathbb{R}}
\newcommand{\bN}{{\mathbb N}}
\newcommand{\sA}{\mathcal{A}}
\newcommand{\sB}{\mathcal{B}}
\newcommand{\cA}{\mathcal{A}}
\newcommand{\cB}{\mathcal{B}}
\newcommand{\e}{\mathrm{e}}
\newcommand{\balpha}{{\bm{\alpha}}}
\newcommand{\bbeta}{\bm{\beta}}
\newcommand{\blambda}{\bm{\lambda}}
\newcommand{\bnu}{\bm{\nu}}
\newcommand{\bc}{\bm{c}}
\newcommand{\bu}{\bm{u}}
\newcommand{\bv}{\bm{v}}
\newcommand{\bx}{\bm{x}}
\newcommand{\by}{\bm{y}}
\newcommand{\bzero}{\mathbf{0}}
\newcommand{\SAGE}{\mathrm{SAGE}}
\newcommand{\obalpha}{{\overline{\balpha}}}
\newcommand{\of}{\overline{f}}
\newtheorem{theorem}{Theorem}[section]
\newtheorem{satz}[theorem]{Theorem}
\newtheorem{prop}[theorem]{Proposition}
\newtheorem{coro}[theorem]{Corollary}
\newtheorem{lemm}[theorem]{Lemma}
\theoremstyle{definition}
\newtheorem{defi}[theorem]{Definition}
\newtheorem{bsp}[theorem]{Example}
\newtheorem{bem}[theorem]{Remark}
 \DeclareMathOperator{\supp}{supp}
 \DeclareMathOperator{\conv}{conv}
 \DeclareMathOperator{\cl}{cl}
 \DeclareMathOperator{\relint}{relint}
 \DeclareMathOperator{\dom}{dom}
 \DeclareMathOperator{\AGE}{AGE}
 \DeclareMathOperator{\rec}{rec}
 \DeclareMathOperator{\Pow}{Pow} % power cone
\newtheorem{remark}[theorem]{Remark}
\def\@settitle{\begin{center}%
		\baselineskip14\p@\relax
		%\bfseries
		\bf\Large%<- NEW
%		\uppercasenonmath\@title
		\@title
	\end{center}%
}
\title{Nonnegativity of signomials with Newton simplex
  over $\mathcal{A}$-convex sets}
\author{G. Averkov}
\address{Gennadiy Averkov, Brandenburgische Technische Universit\"at Cottbus-Senften\-berg, Platz der Deutschen Einheit 1, 03046 Cottbus}
\email{averkov@b-tu.de}
\author{J. Ellwanger}
\address{Jonas Ellwanger, Goethe Universit\"at, Robert-Mayer Stra\ss{}e 10, 60325 Frankfurt am Main,
 Germany\medskip}
\email{ellwange@math.uni-frankfurt.de}
\author{T. Theobald}
\address{Thorsten Theobald, Goethe Universit\"at, Robert-Mayer Stra\ss{}e 10, 60325 Frankfurt am Main,
 Germany\medskip}
\email{theobald@math.uni-frankfurt.de}
\author{T. de Wolff}
\address{Timo de Wolff, Technische Universit\"at Braunschweig, Institut f\"ur Analysis und Algebra, AG Algebra, Universit\"atsplatz 2, 38106 Braunschweig,
 Germany\medskip}
\email{t.de-wolff@tu-braunschweig.de}
\subjclass[2020]{12D10, 14P05, 52A20, 90C23, 05E14}
\keywords{Nonnegativity, exactness, signomial, simplex, SAGE, SONC, $\cA$-convexity}
\begin{document}

\begin{abstract}We study a class of signomials whose positive support is
	the set of vertices of a simplex and which may have several
	negative support points in the simplex. Various groups of authors
	have provided an exact characterization for the global nonnegativity of 
	a signomial in this class 
	in terms of circuit signomials
	and that characterization provides a tractable nonnegativity test.
	We generalize this characterization to the constrained nonnegativity over a
	set $X$ under an additional convexity precondition in the exponential moment space.
	This provides a tractable nonnegativity test over $X$ for the class
	in terms of a power cone program. Our proof methods rely on a variant of the
    convex cone of  constrained SAGE signomials (sums of arithmetic-geometric exponentials) and
	the duality theory.
\end{abstract}

\date{\today}
\maketitle

\section{Introduction}

Let $\sA  \subseteq \bR^n$ be a finite set. 
A \emph{signomial} (or \emph{exponential sum}) supported on $\sA$ is a function of the form 
\begin{align*}
	f(\bx) = \sum_{\balpha \in \sA} c_{\balpha} \exp\langle \balpha, \bx \rangle,
\end{align*}
where $c_{\balpha} \in \bR$ and $\langle \cdot, \cdot \rangle$ denotes the usual
inner product. 
If all $\balpha$ are nonnegative integer vectors,
then the substitution $y_i = \exp(x_i)$ defines
a polynomial function
$p(\by) = \sum_{\balpha \in \sA} c_{\balpha} \by^{\balpha}$
on the nonnegative orthant $\bR_+^{n}$. 
To strengthen this connection, every monomial vector
$\balpha \in \sA$ is associated with a ``monomial'' basis function
$\e^{\balpha}$ which takes values 
$\e^{\balpha}(\bx) = \exp \langle\balpha,\bx\rangle$. 
From an optimization perspective, for a given signomial $f(\bx)$ 
one is interested in the optimization problem
\begin{align*}
	\min  \{ f(\bx) \,|\,\bx \in X  \},
\end{align*}
where $X \subseteq \R^n$ is a nonempty feasibility region, given in some reasonable way, say, by a system of explicit inequality constraints. 
In what follows, we assume that the feasibility region $X$ is convex in a certain non-linear coordinate system (details follow). 
We refer the reader to \cite{epr-2020} and its references for the multifaceted 
occurrences of signomials in mathematics.

In recent years, investigating sparse settings became an intensely researched area in real algebraic geometry, and in polynomial and signomial optimization.
Specifically, in signomial optimization, for given support $\sA$, one considers the space $\R^\sA$ of all real multivariate signomials supported on $\sA$. The more usual notation for $\R^{\cA}$ would be the space of all vectors 
$(c_\balpha)_{\balpha \in \sA}$, where $c_\balpha \in \R$, 
 but we use  this notation to denote the respective signomial $f(\bx)$ defined by these coefficient vectors. An initiating moment for modern developments in sparse algebraic geometry was the celebrated BKK theorem, which provides a sparse counterpart of the classical B\'ezout's theorem,  and the subsequent 
 developments by Gelfand, Kapranov and Zelevinsky, which are summarized in \cite{Gelfand:Kapranov:Zelevinsky}, who coined the terminology ``$\sA$-philosophy'', and who investigated the behavior of the space $\R^\sA$ and structures within like $\sA$-discriminants.
Developments on the real algebraic geometry side include the fewnomial theory, initiated by Khovanski,  see e.g., \cite{rojas-2025} for an overview of the recent advances, 
and exploiting sparsity in a 
sums of squares context, see e.g., \cite{wang-magron-lasserre-tssos}, as well as the development of other certificates which perform well in sparse settings; see e.g., \cite{aps-2024}.
This development is further motivated by the fact that in many real-world applications supports $\sA$ of signomials and polynomials are sparse.

As solving signomial optimization problems is NP-hard even in the unconstrained case and notoriously hard to solve in practice, characterizing classes of sparse signomials for  which nonnegativity on a convex 
set $X$ can be decided efficiently and effectively is of ubiquitous interest.
In the case of global nonnegativity of polynomials, a prominent classical 
result of this kind includes Hilbert's classification
\cite{hilbert-1888}. 
For homogeneous polynomials in two variables, homogeneous 
quadratic forms or homogeneous ternary quartics, this classification
ensures the equality of nonnegative polynomials
with sums of squares. From the viewpoint of convex optimization, 
the global nonnegativity problem can be formulated as a semidefinite program. 
These techniques, however, do not extend to the case of signomials.

\medskip

For a given set $\sA \subseteq \R^n$ of exponent vectors of a signomial, we say that a set $X \subseteq \R^n$ is \emph{$\mathcal{A}$-convex} if
there exists $\obalpha \in \cA$ such that the image $\varphi(X)$ of $X$ under the map $\varphi : \R^n \to \R_{>0}^n$, 
\[
	\varphi(\bx) :=  \bigl(\exp \langle \balpha-\obalpha , \bx \rangle \bigr)_{\balpha \in \cA \setminus \{\obalpha\}}
\]
is a convex set. If ${\bf 0} \in \cA$ and we choose $\obalpha = {\bf 0}$, then the underlying map is simply 
\[
	\varphi(\bx) := \bigl(\exp \langle \balpha , \bx \rangle \bigr)_{\balpha \in \cA \setminus \{\bf 0\}} \, ,
\]
that is, by this $\varphi$, we pass from the $x_i$-variables to the variables $y_i = \exp(x_i)$ and eventually carry out a monomial substitution via $\bx \mapsto \by \mapsto (\by^{\balpha} )_{\balpha \in \sA \setminus \{ \bf 0 \}}$. 

In this article, we consider the specific case that the Newton polytope of $f$ (that is, the convex hull $\conv(\sA)$ of
its support vectors $\sA$) is a simplex and $X$ is $\mathcal{A}$-convex. We assume that terms corresponding to non-vertices of $\conv(\cA)$ have negative coefficients, and all of the coefficients corresponding 
to vertices of $\conv(\cA)$ are either positive or unrestricted in sign.
Let us investigate the following toy example.
\begin{bsp} \label{ex: introduction}
	Consider the signomial
	\begin{align*}
		f(x,y)=13+\exp(4x+2y)+\exp(2x+4y)-12\exp(x+y)-3\exp(2x+2y), 
	\end{align*}
	whose support is shown in Figure~\ref{fi:newton-polytope3} (upper left).
	We aim to decide whether $f$ is nonnegative over the triangle  $X=\conv\{(-1,0)^T,(0,-1)^T,(0,0)^T\}$.
	It turns out that $X$ is $\mathcal{A}$-convex.
	The respective convex set $\varphi(X)$ for $\obalpha = {\bf 0}$ is depicted in Figure~\ref{fi:newton-polytope3} (upper right). 
	The nonnegativity criterion for this choice of the support and $X$ will be provided in Example~\ref{ex:solution-example}.
\end{bsp}

\ifpictures
\begin{figure}[t]
	\[\scalebox{0.8}{
	\begin{tikzpicture}
		\begin{axis}[
			x=1cm,y=1cm,
			axis lines=middle,
			xmin=0,
			xmax=4.5,
			ymin=0,
			ymax=4.5]
		\end{axis}
		\draw [fill=blue] (4,2) circle (2.5pt);
		\draw [fill=green] (2,2) circle (2.5pt);
		\draw [fill=blue] (2,4) circle (2.5pt);
		\draw [fill=green] (1,1) circle (2.5pt);
		\draw [fill=blue] (0,0) circle (2.5pt);
		\draw (0,0) -- (4,2);
		\draw (4,2) -- (2,4);
		\draw (0,0) -- (2,4);
	\end{tikzpicture}
}
		\hspace*{2cm}
\scalebox{0.7}{
	\begin{tikzpicture}
		\begin{axis}[
			axis lines=middle,
			xmin=-0.1,xmax=1.1,ymin=-0.1,ymax=1.1,
			xtick distance=1,
			ytick distance=1,
			xlabel=$$,
			ylabel=$$,
			grid=major,
			grid style={thin,densely dotted,black!20}]
			\draw plot[domain=-1:0,smooth] (axis cs:{exp(4*\x+2*0)},{exp(2*\x+4*0)});
			\draw plot[domain=-1:0,smooth] (axis cs:{exp(4*0+2*\x)},{exp(2*0+4*\x)});
			\draw plot[domain=-1:0,smooth] (axis cs:{exp(4*\x+2*(-1-\x))},{exp(2*\x+4*(-1-\x))});
			\draw [fill=black] (axis cs:{exp(4*(-1)+2*0)},{exp(2*(-1)+4*0)}) circle(1.2pt);
			\draw [fill=black] (axis cs:{exp(4*0+2*(-1)},{exp(2*0+4*(-1)}) circle(1.2pt);
			\draw [fill=black] (axis cs:{exp(4*0+2*0)},{exp(2*0+4*0)}) circle(1.2pt);
		\end{axis}
	\end{tikzpicture}
}
\]
\\ 
\includegraphics[width=5cm]{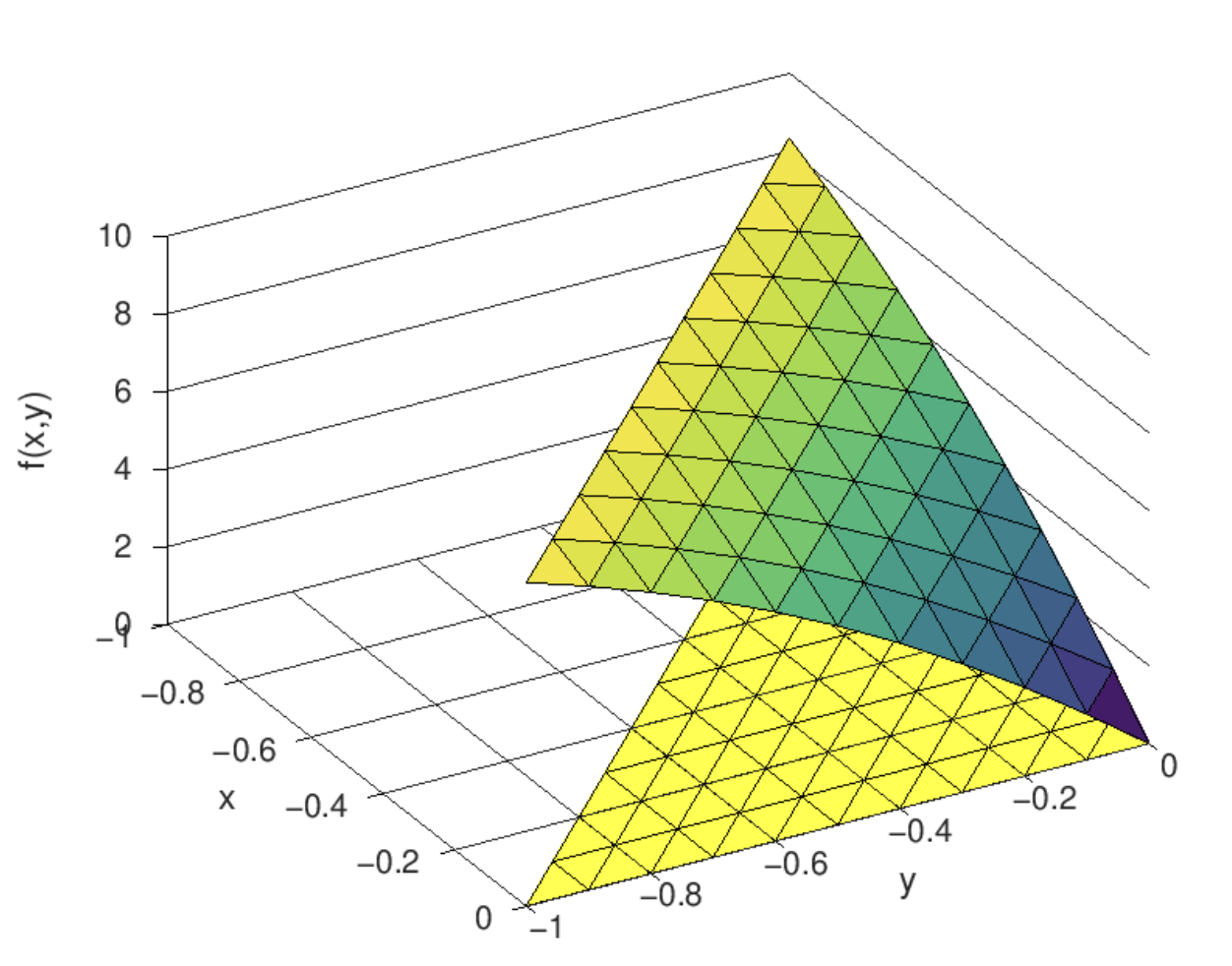}
\hspace{5mm}
\includegraphics[width=5cm]{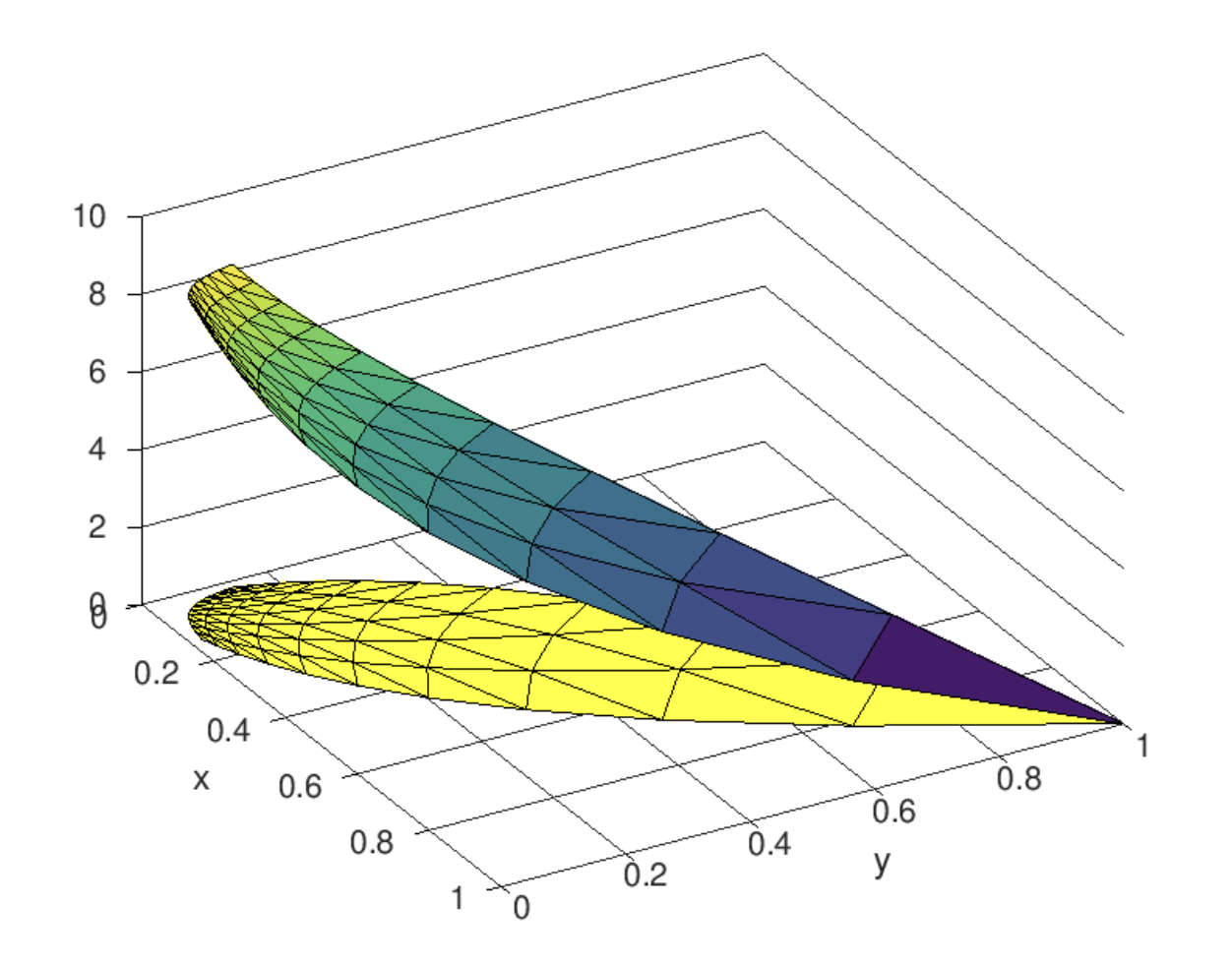}

	\caption{The support and the 
	Newton polytope of $f$  (upper left)
	and the convex set $\varphi(X)$ for $\obalpha = 0$ (upper right) in Example~\ref{ex: introduction}, the signomial $f$ over the triangle $X = \conv\{(-1,0)^T,(0,-1)^T,(0,0)^T\}$ (lower left) and the respective transformed function $(\exp{4 x+2 y} , \exp(4 x + 2 y) \mapsto f(x,y)$ on $\varphi(X)$.} 
	\label{fi:newton-polytope3}
\end{figure}
\fi

\noindent
\textbf{Results.} In this paper, we provide an exact characterization of nonnegativity of signomials and polynomials on $X$ in the setting described above.
In the unconstrained situation $X = \R^n$, the question has been solved in various variants by Iliman and de Wolff \cite{iliman-dewolff-resmathsci}, Murray, Chandrasekaran and Wierman \cite{mcw-newton-poly}, and by Wang \cite{wang-siaga-2022}.
All these papers build on methods for nonnegativity certificates based on the arithmetic-geometric inequality (AM/GM inequality), which define a full-dimensional convex subcone of the cone of nonnegative signomials, also known as SONC or SAGE cone; see \cite{iliman-dewolff-resmathsci, chandrasekaran-shah-2016}.
Here, we extend this result to the case of $\mathcal{A}$-convex sets $X$
using a tailored signed variant of the 
constrained SAGE cone introduced by Murray,
Chandrasekaran and Wierman \cite{mcw-partial-dualization}. 
Signed version means that we distinguish between the positive and the 
negative coefficients of the terms in the signomials, resulting in replacing $\cA$ above by two disjoint support sets $\sA$ and $\sB$ determining along with $X$ the constrained and sign-restricted SAGE 
cone $C_X(\sA,\sB)$, which we formally introduce in Section~\ref{se:prelim}.
This leads to the following main theorem.

\begin{satz}\label{th:main}  For the vertex set of a simplex $\sA\subseteq \bR^n$, a finite set $\sB \subseteq \conv(\sA)\setminus \sA$ and an $\mathcal{A}$-convex set 
	$X \subseteq \R^n$, consider the signomial 
	\begin{equation}
	\label{eq:signomial:main:theorem}
	f=\sum_{\balpha \in \sA}c_{\balpha} \e^{\balpha}+\sum_{\bbeta \in \sB}d_{\bbeta} \e^{\bbeta} \text{ with } c_{\balpha} \in \R \text{ and } d_{\bbeta}<0.
	\end{equation}
	Then $f$ is nonnegative over $X$ if and only if $f$ has a decomposition of the form
    \[
      f \ = \ \sum_{\bbeta \in \mathcal{B}} f_{\bbeta} \;
      \text{ with }f_{\bbeta}(\bx) = \sum_{\balpha \in \cA} c^{(\bbeta)}_{\balpha} 
      \e^{\balpha} + 
      d_{\bbeta} \e^{\bbeta}
\]
where, for all $\bbeta \in \mathcal{B}$, one has  $c_{\balpha}^{(\bbeta)} \in \R$ and $f_{\bbeta}$ is nonnegative over $X$. 

Furthermore, if $X = \R^n$,
then the assertion remains true with the 
coefficients $c_{\balpha}^{(\bbeta)}$ restricted to be nonnegative. 
\end{satz}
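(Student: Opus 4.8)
The plan is to recast the theorem as the convex-cone identity $\mathcal{N}=\mathcal{K}$, where $\mathcal{N}$ is the cone of signomials of the form \eqref{eq:signomial:main:theorem} (with $c_\balpha\in\R$ and $d_\bbeta\le 0$) that are nonnegative on $X$, and $\mathcal{K}:=\sum_{\bbeta\in\sB}C_X(\sA,\{\bbeta\})$ is the constrained SAGE cone, i.e.\ the sum of the single-negative-term AGE cones; membership $f\in\mathcal{K}$ is exactly the asserted decomposition. The inclusion $\mathcal{K}\subseteq\mathcal{N}$ is immediate, since a sum $\sum_\bbeta f_\bbeta$ of $X$-nonnegative signomials is $X$-nonnegative and the coefficients match (each $f_\bbeta$ carries the whole term $d_\bbeta\e^\bbeta$, while $c_\balpha=\sum_\bbeta c^{(\bbeta)}_\balpha$). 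The content lies in the reverse inclusion $\mathcal{N}\subseteq\mathcal{K}$, which I would obtain by duality: assuming $\mathcal{K}$ is closed, $\mathcal{N}\subseteq\mathcal{K}$ is equivalent to $\mathcal{K}^\ast\subseteq\mathcal{N}^\ast$.

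First I would linearize using $\mathcal{A}$-convexity. Fixing the base point $\obalpha\in\cA$ witnessing convexity of $Y:=\varphi(X)$ and dividing by the strictly positive factor $\e^{\obalpha}$, the condition ``$f\ge 0$ on $X$'' becomes ``an affine functional is $\ge 0$ on $Y$'' in the moment coordinates indexed by $\cA\setminus\{\obalpha\}$, with $Y$ convex. Consequently $\mathcal{N}^\ast$ is the closed conic hull of the evaluation vectors $(1,\varphi(\bx))$, $\bx\in X$, corrected by the dual of the sign constraints $d_\bbeta\le 0$; concretely, $v\in\mathcal{N}^\ast$ means there is $y^\ast\in\overline{Y}$ with $v_\obalpha\ge 0$, $v_\balpha=v_\obalpha\,y^\ast_\balpha$ for $\balpha\in\sA\setminus\{\obalpha\}$, and $v_\bbeta\le v_\obalpha\,y^\ast_\bbeta$ for $\bbeta\in\sB$ (the case $v_\obalpha=0$ accounting for recession directions of $Y$). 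Likewise $\mathcal{K}^\ast=\bigcap_{\bbeta}C_X(\sA,\{\bbeta\})^\ast$, where each $C_X(\sA,\{\bbeta\})^\ast$ is the closed conic hull of the evaluation vectors projected onto the coordinates in $\sA\cup\{\bbeta\}$, following the partial dualization of Murray, Chandrasekaran and Wierman. Thus $v\in\mathcal{K}^\ast$ means that for every $\bbeta$ there is a reference point $y^{(\bbeta)}\in\overline{Y}$, a priori depending on $\bbeta$, with the same $\sA$-part $v_\balpha=v_\obalpha\,y^{(\bbeta)}_\balpha$ and with $v_\bbeta\le v_\obalpha\,y^{(\bbeta)}_\bbeta$.

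The heart of the proof, and the step I expect to be the main obstacle, is to collapse these $|\sB|$ reference points into a single one, so as to pass from $\mathcal{K}^\ast$ to $\mathcal{N}^\ast$. Here the simplex hypothesis is decisive: because the vertices $\sA$ are affinely independent, the coordinates indexed by $\sA\setminus\{\obalpha\}$ parametrize the moment manifold $\varphi(\R^n)$ bijectively, and every point of it satisfies the barycentric identity $y_\bbeta=\prod_{\balpha}y_\balpha^{\lambda^{(\bbeta)}_\balpha}$, where $\bbeta=\sum_\balpha\lambda^{(\bbeta)}_\balpha\balpha$ is the unique convex representation of $\bbeta$ over the vertices. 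Since $\overline{Y}$ lies in the closure of this manifold, the $\sA$-part of a point of $\overline Y$ determines all of its $\sB$-coordinates; as all the $y^{(\bbeta)}$ share the $\sA$-part $(v_\balpha/v_\obalpha)_\balpha$, they must coincide with one common point $y^\ast\in\overline{Y}$, giving $v_\bbeta\le v_\obalpha\,y^\ast_\bbeta$ simultaneously and hence $v\in\mathcal{N}^\ast$. Making this rigorous at the boundary of $Y$ (points with vanishing coordinates, and the interchange of closure and projection), together with verifying that $\mathcal{K}$ is closed---a sum of finitely many closed cones, for which I would normalize generators and exploit the simplex structure---is the technical crux; biduality then yields $\mathcal{N}\subseteq\mathcal{K}$.

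Finally, for the strengthening when $X=\R^n$, I would not repeat the analysis but read it off from the pieces $f_\bbeta$ already produced. Each $f_\bbeta$ is nonnegative on all of $\R^n$ and has positive support contained in the vertex set $\sA$ together with the single interior point $\bbeta$. For any vertex $\balpha_0\in\sA$ there is a direction $\bx$ in which $\langle\balpha_0,\bx\rangle$ strictly exceeds $\langle\gamma,\bx\rangle$ for every other $\gamma\in\sA\cup\{\bbeta\}$; sending $\bx$ to infinity along this ray makes $c^{(\bbeta)}_{\balpha_0}\e^{\balpha_0}$ the dominant term, so nonnegativity forces $c^{(\bbeta)}_{\balpha_0}\ge 0$. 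As $\balpha_0$ and $\bbeta$ were arbitrary, all coefficients $c^{(\bbeta)}_\balpha$ are automatically nonnegative, which is the claimed restriction.
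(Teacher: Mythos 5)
Your strategy identifies the right convexity phenomenon: in the moment coordinates the convexity of $Y=\varphi(X)$ together with the concavity of $\bv\mapsto\prod_{\balpha}v_{\balpha}^{\lambda^{(\bbeta)}_{\balpha}}$ is what makes the theorem true, and your closing argument for $X=\R^n$ (let a ray on which a vertex exponent strictly dominates go to infinity) is exactly the paper's remark after Lemma~\ref{le:decomp1}. But your route through cone duality is genuinely different from the paper's. The paper never dualizes the SAGE-type cone: it writes $f=g-h$ in the variables $v_{\balpha}$, with $g$ convex (the affine part plus all but one of the terms $d_{\bbeta}\prod v_{\balpha}^{\lambda^{(\bbeta)}_{\balpha}}$, each convex because $d_{\bbeta}<0$ and the product is concave) and $h$ concave, applies Fenchel's duality theorem (Theorem~\ref{th:fenchel-duality-theorem}) on $Y$, and uses the \emph{attained} dual optimizer $\by^{*}$ to produce an affine function sandwiched between $h$ and $g$; the two halves of the sandwich are the two nonnegative summands, and iterating peels off one $\bbeta$ at a time. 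This is primal and constructive: the only hypothesis to check is $\relint(\dom g)\cap\relint(\dom h)\neq\emptyset$, which is automatic since both domains equal $Y$.

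The genuine gap in your version is the point you defer: closedness of $\mathcal{K}=\sum_{\bbeta\in\sB}C_X(\sA,\{\bbeta\})$. From $\mathcal{K}^{*}\subseteq\mathcal{N}^{*}$, biduality only yields $\mathcal{N}\subseteq\cl\mathcal{K}$, and a Minkowski sum of closed convex cones need not be closed; here the summands carry vertex coefficients unrestricted in sign (as the theorem requires --- so they are not the AGE cones of Murray--Chandrasekaran--Wierman whose dual description you invoke), and for low-dimensional or unbounded $X$ they can share large faces or lineality directions, so ``normalize generators'' is not yet an argument. Without closedness you only certify that a nonnegative $f$ is a \emph{limit} of decomposable signomials. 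A second, smaller soft spot is the reduction of a general element of the closed conic hull of evaluation vectors to a single reference point $\by^{*}\in\cl Y$: this needs Jensen's inequality for the concave power product plus a treatment of the recession part $v_{\obalpha}=0$ and of boundary points of $Y$ --- doable, but exactly the bookkeeping that the Fenchel route makes unnecessary. If you keep the dual picture, I would still switch the engine to Fenchel duality (or any separation argument with guaranteed attainment) so that the decomposition is produced directly rather than inferred from a bidual identity.
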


Note that the specific situation $X=\R^n$ is the known unconstrained case as considered above and then, for a nonnegative signomial, necessarily all 
$c_{\balpha}$ for $\balpha \in \cA$ are nonnegative.
As a consequence of the unconstrained case, deciding whether a signomial in this class is nonnegative over $\R^n$ can be formulated as a convex optimization problem, specifically, a relative entropy program.
In the constrained case, we present 
a convex formulation 
in the exponential moment space in Section~\ref{se:nonneg}.
Furthermore, the result reveals a hidden convexity structure in the nonnegativity question of our class of constrained signomials.
Note that the precondition that $X$ is $\mathcal{A}$-convex is a significant restriction.

We complement our main result by showing that the condition $\sB \subseteq \conv(\sA)\setminus \sA$ indeed is necessary via providing counterexamples 
to more general statements; see Lemma \ref{le:limitation:of:generalization} and Example \ref{ex:two-dim2}.
Furthermore, we provide a full characterization of nonnegativity in the univariate case
in terms of a separability result; see Theorem \ref{onedim}. A formulation of our main result in the language of polynomials
is given in Corollary~\ref{co:polyversion}.

Our result strongly exploits the sign information of the support points.
In the study of real polynomials, using sign information of the coefficients
has a rich history dating back, in particular, to Descartes' rule of signs.
Recently, Bihan and Dickenstein \cite{bihan-dickenstein-descartes-rule} and Feliu and Telek \cite{feliu-telek-2022,telek-2024} generalized Descartes' rule of signs.

\section{Preliminaries\label{se:prelim}}
The current study was motivated by the question whether the
global SAGE nonnegativity result for Newton simplices can
be generalized to a constrained version where $X$ is a subset
of $\R^n$. However, when working
over constrained sets $X$ of $\R^n$, a potential
generalization of the unconstrained SAGE characterization 
in Theorem~\ref{th:main} where the SAGE cone is simply
replaced by the $X$-SAGE cone does not hold in general. This
is a consequence of Example \ref{ex:counterex1} below.
To formalize that counterexample and to set up notation, 
we review some basic concepts on SAGE certificates over a convex set.

\subsection{Sparse signomials}

From now on, 
$\sA \subseteq \R^n$ is an affinely independent, finite set
and $\sB \subseteq \R^n$ is finite. Usually $\cA$ and $\sB$ are disjoint. These sets
act as the \textit{positive} and \textit{negative support} of our signomials.
We denote by
$\bR^{\sA}$ the $|\sA|$-tuples of $\bR$ indexed by $\sA$.
The set $X$ is a fixed convex subset of $\bR^n$ and without
further noticing, we always assume that $X$ is nonempty.

Let $f$ be a general signomial whose coefficients except at most one are positive,
\begin{equation}
	\label{eq:signomial5}
	f = \sum_{\balpha \in \sA} c_{\balpha} \e^{\balpha} +
	d \e^{\bbeta} \text{ with }
	c_{\balpha} > 0 \, \text{ and } d \in \bR.
\end{equation}

For $\bbeta \in \sB$, the set of all these functions which are nonnegative on $X$ form a convex cone called the \emph{constrained $\bbeta$-$\AGE$ cone} with respect to $\sA$. We write
\begin{align*}
	C_X(\sA,\bbeta)=\left \{  f=\sum_{\balpha \in \sA} c_{\balpha} \e^{\balpha} +d \e^{\bbeta} \ | \ f \geq 0 \ \text{on} \ X \ \text{and} \  c_{\balpha}\geq 0 \ \text{for all} \ \balpha \in \sA \right\}.
\end{align*}
Based on $C_X(\sA,\bbeta)$ we also introduce the \emph{signed $X$-$\SAGE$ cone}
\begin{align}
    \label{eq:signed-sage-cone}
	C_X(\sA,\sB)=\sum_{\bbeta \in \sB}C_X(\sA,\bbeta),
\end{align}
where the sum denotes the Minkowski sum. By the choice of $C_X(\sA,\sB)$ the exponent vectors of $f \in C_X(\sA,\sB)$ corresponding to negative coefficients form a subset of $\sB$. Signomials  in $ C_X(\sA,\sB)$
are called \emph{$X$-$\SAGE$-signomials} with respect to $\sA$ and $\sB$. In this terminology, we omit $\sA$ and $\sB$ if their choice is clear from the context. The signed $X$-$\SAGE$ cone provides a convenient notation for expressing nonnegativity certificates based on  the AM/GM inequality and used in the context of optimization 
(see \cite{iliman-dewolff-lower-bounds,mnr-2022,mcw-newton-poly}).
Note that our definitions are
consistent with unsigned versions of the
$X$-SAGE cone in the literature (e.g., \cite{mcw-partial-dualization,theobald-ragopt}).

We now explain how to decide membership to the $X$-SAGE cone.
The relative entropy function of two vectors 
$\bu,\bv\in \bR^n_{>0}$ is defined as $D(\bu,\bv)=\sum_{i=1}^{n}u_i\ln(u_i/v_i)$. 
Denote by $\sigma_X(\by) = \sup\{ \langle \by, \bx \rangle \mid \bx \in X \}$ the 
\emph{support function} of $X$ from classical convex geometry. The function
$\sigma_X$ is a convex function $\bR^n \to \bR_+ \cup \{\infty\}$.
If $X$ is a polyhedron, then $\sigma_X$ is linear on every normal cone of $X$.
In the following statement, $\e \bc$ is the usual scalar multiplication of Euler's number
$\e$ with the vector $\bc$. 

\begin{theorem} [\cite{chandrasekaran-shah-2016,mcw-partial-dualization}]~
	\label{th:nonneg-entropy-cond}
	\begin{enumerate}
		\item[1.]  The signomial $f$ in~\eqref{eq:signomial5}
		is nonnegative on $\R^n$ 
		if and only if there exists $\bnu \in \R^{\sA}_+$
		with $\sum_{\balpha \in \sA}
		\nu_{\balpha} \balpha
		= (\sum_{\balpha \in \sA} \nu_{\balpha})\bbeta$
		and $D(\bnu,\e \bc) \le d$.
		\item[2.] The signomial $f$ in~\eqref{eq:signomial5}
		is nonnegative on $X$ if and only if there exists
		$\bnu \in \R^{\sA}_{+}$ with
		$\sigma_X(\sum_{\balpha \in \sA}\nu_{\balpha} (\bbeta-\balpha)) +
		D(\bnu,
		\e\bc) \le d$.
	\end{enumerate}
\end{theorem}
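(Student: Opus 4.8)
The plan is to reduce both assertions to a single scalar inequality and then identify that inequality with a convex-duality statement whose dual is exactly the claimed relative-entropy condition. Since $\e^{\langle\bbeta,\bx\rangle}>0$, nonnegativity of $f$ on $X$ is equivalent to
\[
\sum_{\balpha\in\sA}c_\balpha\,\e^{\langle\balpha-\bbeta,\bx\rangle}\ \geq\ -d\qquad\text{for all }\bx\in X,
\]
that is, to $p^\ast\geq -d$, where $p^\ast:=\inf_{\bx\in X}G(A\bx)$ with the convex function $G(\bm{t}):=\sum_{\balpha\in\sA}c_\balpha\,\e^{t_\balpha}$ on $\R^\sA$ and the linear map $A\colon\R^n\to\R^\sA$, $(A\bx)_\balpha:=\langle\balpha-\bbeta,\bx\rangle$. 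As all $c_\balpha>0$ and $X\neq\emptyset$, the value $p^\ast$ lies in $[0,\infty)$ and is finite, so we are dealing with the well-posed convex program $\inf_{\bx}[G(A\bx)+\iota_X(\bx)]$, where $\iota_X$ is the indicator function of $X$.

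First I would compute the two relevant Fenchel conjugates. The adjoint of $A$ is $A^\ast\bnu=\sum_{\balpha}\nu_\balpha(\balpha-\bbeta)$, the conjugate of the indicator is the support function, $\iota_X^\ast=\sigma_X$, and the separable conjugate of $G$ is the relative entropy: for $\bnu\in\R_+^\sA$ one has $\sup_{t\in\R}\left(\nu_\balpha t-c_\balpha\e^{t}\right)=\nu_\balpha\ln\frac{\nu_\balpha}{\e c_\balpha}$ (with the convention $0\ln 0=0$, and the value $+\infty$ as soon as some component is negative), so that $G^\ast(\bnu)=D(\bnu,\e\bc)$ for $\bnu\in\R_+^\sA$ and $G^\ast(\bnu)=+\infty$ otherwise. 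This is the key computational identity that makes the exponential/relative-entropy pair appear. Consequently the Fenchel--Rockafellar dual of the primal program reads
\[
\sup_{\bnu\in\R_+^\sA}\left[-D(\bnu,\e\bc)-\sigma_X\left(\sum_{\balpha\in\sA}\nu_\balpha(\bbeta-\balpha)\right)\right].
\]

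The easy (sufficiency) direction is then weak duality, which here is nothing but the weighted AM/GM inequality: for every $\bnu\in\R_+^\sA$ one has $p^\ast\geq -D(\bnu,\e\bc)-\sigma_X\left(\sum_{\balpha}\nu_\balpha(\bbeta-\balpha)\right)$, so a feasible $\bnu$ with $D(\bnu,\e\bc)+\sigma_X\left(\sum_{\balpha}\nu_\balpha(\bbeta-\balpha)\right)\leq d$ forces $p^\ast\geq -d$, i.e.\ $f\geq 0$ on $X$. For the converse I would invoke strong duality. Because $G$ is finite on all of $\R^\sA$, its domain is the whole space, so the relative-interior constraint qualification of Fenchel--Rockafellar duality is satisfied as soon as $X\neq\emptyset$; this yields no duality gap and, crucially, attainment of the dual supremum by some $\bnu^\ast\in\R_+^\sA$. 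Hence $f\geq 0$ on $X$ gives $p^\ast\geq -d$, whence $D(\bnu^\ast,\e\bc)+\sigma_X\left(\sum_{\balpha}\nu^\ast_\balpha(\bbeta-\balpha)\right)=-p^\ast\leq d$, which is exactly the condition asserted in part~2.

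I expect the attainment of the dual optimum — equivalently, promoting the inequality ``$\inf_{\bnu}\leq d$'' to the existence of a genuine minimizer $\bnu^\ast$ — together with the verification of the constraint qualification to be the main obstacle; here the finiteness of $G$ and the nonemptiness of $X$ are precisely what make the regularity conditions go through, while the affine independence of $\sA$ keeps the balancing data nondegenerate. One must also dispose of the trivial cases $d\geq 0$ and $\bnu=\bm 0$. Finally, part~1 follows by specializing $X=\R^n$: then $\sigma_{\R^n}$ is the indicator of $\{\bm 0\}$, so the support-function term is finite (and equal to $0$) exactly when $\sum_{\balpha}\nu_\balpha\balpha=\left(\sum_{\balpha}\nu_\balpha\right)\bbeta$, and under this balancing constraint the condition collapses to $D(\bnu,\e\bc)\leq d$, as claimed.
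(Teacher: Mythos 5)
Your proposal is correct, but note that the paper itself gives no proof of this theorem: it is quoted from \cite{chandrasekaran-shah-2016,mcw-partial-dualization}, and your Fenchel--Rockafellar argument (conjugating the separable exponential $G$ to the relative entropy $D(\cdot,\e\bc)$, the indicator $\iota_X$ to $\sigma_X$, using weak duality for sufficiency and the everywhere-finiteness of $G$ plus $\relint X\neq\emptyset$ for zero gap with dual attainment) is essentially the standard proof given in those references. One cosmetic remark: the version of Fenchel duality the paper later states (Theorem~\ref{th:fenchel-duality-theorem}, Rockafellar's Theorem~31.1) has no linear map, so to run your argument verbatim you should invoke the variant with a linear operator (Rockafellar's Corollary~31.2.1) or absorb $A$ into $g$ and compute $(G\circ A)^*$ via an infimal projection; also, the affine independence of $\sA$ plays no role here, as the theorem holds for arbitrary finite $\sA$ with positive $c_\balpha$.
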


This statement can be used to express the convex cones $C_{X}(\sA,\bbeta)$ and 
$C_{X}(\sA,\sB)$ in terms of the relative entropy function and in terms of
the support function of $X$. As another consequence, 
if the set $(\sA')^T X=\{ (\langle \balpha,\bx \rangle)_{\balpha \in \sA'}\in \bR^{\sA'} \  | \ \bx\in X\}$ 
with $\sA' := \sA \cup \{\bbeta\}$ is a rational polyhedron,
then, by \cite{mnt-2023}, the nonnegativity of $f$ on $X$ can be formulated
as a second-order program.

\begin{prop}[Theorem~1 and Corollary~1 in \cite{mcw-partial-dualization}]
  \label{pr:cx-rel-entropy}
	The constrained $\bbeta$-AGE cone can be expressed as
	\begin{align*}
		C_{X}(\sA,\bbeta)= \bigg \{&f=\sum_{\balpha \in \sA} c_{\balpha} \e^{\balpha} +d \e^{\bbeta} \ | \ \text{There exists }\bnu \in \bR^{\sA} \text{ with } \\ 
		&\sigma_X\left (\sum_{\balpha \in \sA}\nu_{\balpha} (\bbeta-\balpha)\right )+D(\bnu,\e\bc)\leq d
		 \text{ and }c_{\balpha}\geq 0\bigg \}.
	\end{align*}
\end{prop}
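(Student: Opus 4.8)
The plan is to rewrite the nonnegativity of $f$ on $X$ as a one-sided bound on a convex infimum and then to apply Fenchel--Rockafellar duality, where the two dual ingredients turn out to be exactly the support function $\sigma_X$ and the relative entropy $D(\,\cdot\,,\e\bc)$. First I would normalize: since $c_\balpha\ge 0$ is built into the description of both sides and $\exp\langle\bbeta,\bx\rangle>0$, dividing $f$ by $\exp\langle\bbeta,\bx\rangle$ shows that, for $\bx\in X$, nonnegativity of $f$ is equivalent to $g(\bx)\ge -d$, where $g(\bx):=\sum_{\balpha\in\sA}c_\balpha\exp\langle\balpha-\bbeta,\bx\rangle$ is convex and finite on all of $\R^n$. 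Thus the statement reduces to characterizing when $\inf_{\bx\in X}g=\inf_{\bx}\bigl(g(\bx)+\iota_X(\bx)\bigr)\ge -d$, with $\iota_X$ the indicator function of $X$.

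For the easy inclusion ``$\supseteq$'' I would argue by weak duality. Writing $g=h\circ A$ with $h(\bw)=\sum_{\balpha\in\sA}c_\balpha\e^{w_\balpha}$ and $(A\bx)_\balpha=\langle\balpha-\bbeta,\bx\rangle$, a direct computation gives $h^\ast(\bnu)=D(\bnu,\e\bc)$. Given a feasible $\bnu$ with $\sigma_X\bigl(\sum_\balpha\nu_\balpha(\bbeta-\balpha)\bigr)+D(\bnu,\e\bc)\le d$, set $\by:=\sum_\balpha\nu_\balpha(\balpha-\bbeta)$. The Fenchel--Young inequality for $h$ yields $g(\bx)=h(A\bx)\ge\langle A\bx,\bnu\rangle-D(\bnu,\e\bc)=\langle\bx,\by\rangle-D(\bnu,\e\bc)$, while $\langle\bx,\by\rangle\ge-\sigma_X(-\by)=-\sigma_X\bigl(\sum_\balpha\nu_\balpha(\bbeta-\balpha)\bigr)$ for every $\bx\in X$ by definition of the support function. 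Combining the two bounds gives $g\ge -d$ on $X$, hence $f\ge 0$ on $X$; together with $c_\balpha\ge 0$ this places $f$ in $C_X(\sA,\bbeta)$.

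The hard inclusion ``$\subseteq$'' is where the real work lies and requires strong duality. I would compute the conjugate of $g=h\circ A$ as the inf-projection $g^\ast(\by)=\min\bigl\{D(\bnu,\e\bc)\ :\ \sum_\balpha\nu_\balpha(\balpha-\bbeta)=\by\bigr\}$, the minimum being attained because relative entropy grows superlinearly, and use $\iota_X^\ast=\sigma_X$. Since $g$ is finite everywhere, the qualification $\relint(\dom g)\cap\relint X=\relint X\ne\emptyset$ holds, so Fenchel--Rockafellar duality gives $\inf_{\bx\in X}g=\max_\by\bigl(-g^\ast(\by)-\sigma_X(-\by)\bigr)$ with the maximum attained. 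If $f\ge 0$ on $X$, i.e.\ $\inf_X g\ge -d$, then an optimal $\by^\ast$ together with the minimizer $\bnu$ realizing $g^\ast(\by^\ast)$ satisfies $\sigma_X\bigl(\sum_\balpha\nu_\balpha(\bbeta-\balpha)\bigr)+D(\bnu,\e\bc)\le d$ with $\by^\ast=\sum_\balpha\nu_\balpha(\balpha-\bbeta)$, which is precisely the asserted certificate.

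The main obstacle is exactly this strong-duality-with-attainment step: one must check that the dual supremum is attained (so that the certificate reads ``$\le d$'' rather than merely ``$\le d+\varepsilon$'') and that the conjugate-of-composition formula for $g^\ast$ holds with an attained inner minimum; both rest on the everywhere-finiteness of $g$ and the superlinear growth of $D(\,\cdot\,,\e\bc)$. The degenerate indices with $c_\balpha=0$ cause no trouble, since the standard conventions $0\ln(0/0)=0$ and $s\ln(s/0)=+\infty$ for $s>0$ force the corresponding $\nu_\balpha$ to vanish on both sides. Finally, I would remark that when all $c_\balpha>0$ the claim is immediate from Theorem~\ref{th:nonneg-entropy-cond}(2) and the definition of $C_X(\sA,\bbeta)$, so the duality computation is really needed only to make the cone description self-contained and to cover the boundary case $c_\balpha=0$.
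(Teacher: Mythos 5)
Your argument is correct, but note that the paper does not prove this proposition at all: it is imported verbatim as Theorem~1 and Corollary~1 of \cite{mcw-partial-dualization}, and within the paper's own logic it is an immediate consequence of Theorem~\ref{th:nonneg-entropy-cond}(2) combined with the definition of $C_X(\sA,\bbeta)$ (a point you yourself make in your closing remark). What you have written is a self-contained reconstruction of the underlying duality argument from the cited source, and it checks out: the conjugate computation $h^*(\bnu)=D(\bnu,\e\bc)$ is right (including that $c_\balpha=0$ forces $\nu_\balpha=0$, matching the conventions for $D$); the weak-duality direction via Fenchel--Young and $\langle\bx,\by\rangle\ge-\sigma_X(-\by)$ is sound; and for the converse, the composition formula $g^*(\by)=\min\{h^*(\bnu):A^{\!*}\bnu=\by\}$ with attained minimum holds by \cite[Thm.~16.3]{rockafellar-convex-analysis} since $\dom h=\R^{\sA}$, while the constraint qualification $\relint(\dom g)\cap\relint X=\relint X\neq\emptyset$ (the paper globally assumes $X\neq\emptyset$) gives dual attainment exactly as in Theorem~\ref{th:fenchel-duality-theorem}. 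Compared with simply citing Theorem~\ref{th:nonneg-entropy-cond}(2), your route buys two things: it covers the boundary case $c_\balpha=0$, which the statement \eqref{eq:signomial5} with $c_\balpha>0$ does not literally address, and it makes explicit the same Fenchel-duality mechanism that the paper later deploys in the proof of Lemma~\ref{le:decomp1}, so the two arguments are recognizably of one piece. The only stylistic quibble is that in the proposition $\bnu$ ranges over $\R^{\sA}$ rather than $\R^{\sA}_+$; this is harmless since $D(\bnu,\e\bc)=+\infty$ off the nonnegative orthant, but it is worth one sentence.
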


Combining Proposition~\ref{pr:cx-rel-entropy} with 
\eqref{eq:signed-sage-cone}
gives a characterization $C_X(\sA,\sB)$ in terms of the relative entropy function and the support function of $X$. 
For convex sets $X$, there also exists a theory on the dual constrained
SAGE cones, see \cite{chandrasekaran-shah-2016, knt-2021}.

\begin{remark}
The SAGE language, which we introduce here, has a counterpart called sums of nonnegative circuit functions, which is usually displayed in a polynomial setting.
The main difference is that building blocks there are circuits, i.e., minimally affinely dependent support sets with one negative term, but the two resulting cones are identical up to technicalities detailed by Wang \cite{wang-siaga-2022} and, independently, Murray, Chandrasekaran, and Wierman \cite{mcw-newton-poly}.
In the circuit setting, nonnegativity is decided by an invariant called circuit number, which is the counterpart of the entropy function in the SAGE language.

Also, in the circuit setting there exists a dual version, see, e.g., \cite{heuer-dewolff-2025}, and a generalized circuit concept, called
sublinear circuits, was developed in \cite{mnt-2023}. In paragraph~3.4.1 of  \cite{aps-2024}, the duality for SAGE/SONC cones is presented  in the self-contained fashion and the ``reversed order'', starting from the dual SAGE/SONC cones and deducing the SAGE/SONC cones out of them. This reverse order of deduction works equally well. When  a pair of dual cones is considered, calling one of them primal (main) and the other dual (subsidiary) is largely a psychological phenomenon -- it is a habit rather than necessity.  The derivation in \cite{aps-2024} is in the spirit of the $\cA$-convexity definition which
we introduced above. 
\end{remark}

The following example shows that the $\cA$-convexity assumption on $X$ in 
Theorem~\ref{th:main} is crucial and that its assertion is not true for general sets $X$.

\begin{bsp} \label{ex:counterex1}
Consider the convex set $X = \R \times \{0\}$ and 
$\mathcal{A} =\{ \balpha^{(0)}, \balpha^{(1)}, \balpha^{(2)} \}
= \{(0,0),(2,1),(4,0)\}$,
$\mathcal{B} = \{\bbeta^{(1)}, \bbeta^{(2)} \} = 
\{(1,0),(3,0)\}$. That is, we consider
signomials of the form
\begin{equation}
  \label{eq:5-terms}
  f(x,y) = c_1 + c_2 \e^{\balpha^{(1)}}(x,y) + c_3 \e^{\balpha^{(2)}}(x,y)
    + d_1 \e^{\bbeta^{(1)}}(x,y) + d_2 \e^{\bbeta^{(2)}}(x,y) \, .
\end{equation}
Then $f$ is nonnegative on $X$ if and only if the 
univariate signomial
\begin{equation}
  \label{eq:5-terms2}
  g(x) := f(x,0) = c_1 + d_1 e^x + c_2 e^{2x} + d_2 e^{3x} + c_3 e^{4x}
\end{equation}
is nonnegative over $\R$. 
For every choice $z_1 < z_2$ of real numbers,
consider the univariate signomial
$g(x) = (e^x - e^{z_1})^2 (e^x - e^{z_2})^2$,
which has a double zero in $z_1$ and in $z_2$ and 
which is of the form~\eqref{eq:5-terms2}. Denoting the
coefficients of $g$ by $c_1,c_2,c_3,d_1,d_2$ (as in~\eqref{eq:5-terms2}), 
then defining the signomial $f$ as in~\eqref{eq:5-terms}
satisfies $f(x,1) = g(x)$. Since the zeroes of 
SAGE signomials are linear subspaces,
see~\cite{Forsgaard:deWolff:BoundarySONCCone},
$g$ cannot be
a SAGE signomial and thus $f$ cannot be contained in
the $X$-SAGE cone. 

Moreover, there is a fundamental limitation that rules out the possibility of using the  $X$-$\SAGE$ cone for completely characterizing non-negativity in this example.  The $X$-$\SAGE$ cone is second-order cone representable, but due to the fact that the choice of $z_1,z_2$ above is arbitrary, it follows from the main result in \cite{averkov2019optimal} that the cone of signomials \eqref{eq:5-terms} that are  non-negative on $X$ is not second-order cone representable. That is, it is not only the $X$-$\SAGE$ that does not characterize non-negativity in this setting, but any approach that stays within the
second-order cone programming paradigm. By the main result of \cite{averkov:scheiderer:2025}, it is possible to characterize non-negativity in this example using linear matrix inequalities of size $3 \times 3$. 
\end{bsp}

\section{Nonnegativity of signomials with Newton simplex\label{se:nonneg}}

In this section we prove our main Theorem~\ref{th:main},
and discuss further examples and extensions.

Let $f$ be a signomial of the form~\eqref{eq:signomial5}.
We can assume that $\bzero \in \mathcal{A}$.
Let $Y$ be the image of $X$ under the map
$\varphi:\bx=(x_1, \ldots, x_n)^T 
\mapsto (\exp \langle \balpha , 
\bx \rangle)_{\balpha \in \cA \setminus \{\bzero\}}$.
We call $Y$ the \emph{exponential moment space} of $f$.

We introduce the \emph{exponential moment variables} $v_{\balpha} = \exp \langle \balpha, \bx \rangle$ and express the problem
in terms of the $v_{\balpha}$. Note that $v_{\bzero} = 1$.
As described in terms of the mapping $\varphi$, any valid choice of
$\bx \in X$ determines the vector $\bv = (v_{\balpha})$ and vice versa, any valid choice of
$(v_{\balpha})_{\balpha \in \mathcal{A} \setminus \{\bzero\}}$ 
in $\varphi(X)$ uniquely determines $\bx$. Therefore, we can consider
$f$ as a function in the variables 
$(v_{\balpha})_{\balpha \in \cA}$.

For every $\bbeta \in \conv(\cA)$, we can write $\bbeta$ in terms of the \textit{barycentric coordinates},
$\bbeta = \sum_{\balpha \in \mathcal{A}} \lambda^{(\bbeta)}_{\balpha} \balpha$
with $0 \le \lambda_{\balpha}^{(\bbeta)} \le 1$,
$\sum_{\balpha \in \mathcal{A}} \lambda_{\balpha}^{(\bbeta)} = 1$.
Then it suffices to show the following lemma.

\begin{lemm}
\label{le:decomp1}
Let $Y$ be nonempty and convex and $\mathcal{A}$ be the vertices of a simplex, 
$\mathcal{B} \subseteq \conv(\cA) \setminus \cA$
and
$f(\bv) = \sum_{\balpha \in \mathcal{A}} c_{\balpha} v_{\balpha} + \sum_{\bbeta \in \mathcal{B}} d_{\bbeta} 
\prod_{\balpha \in \mathcal{A}} v_{\balpha}^{\lambda^{(\bbeta)}_{\balpha}}$
with $c_{\balpha} \in \R$ and $d_{\bbeta} < 0$.
Then $f(\bv)$ is nonnegative on the exponential moment space $Y$ 
if and only if $f$ can be written as a sum 
of nonnegative circuit signomials on the exponential moment space $Y$,
\begin{equation}
  \label{eq:decomp1}
  f \ = \ \sum_{\bbeta \in \mathcal{B}} f_{\bbeta} \;
  \text{ with }f_{\bbeta}(\bv) = \sum_{\balpha \in \cA} c^{(\bbeta)}_{\balpha} 
  v_{\balpha} + 
  d_{\bbeta} \prod_{\balpha \in \mathcal{A}} v_{\balpha}^{\lambda^{(\bbeta)}_{\balpha}}
\end{equation}
with coefficients $c_{\balpha}^{(\bbeta)} \in \R$
for all $\balpha \in \cA$ and $\bbeta \in \mathcal{B}$.
\end{lemm}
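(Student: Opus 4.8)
The plan is to prove the nontrivial ``only if'' direction by reducing the desired circuit decomposition to a purely convex-geometric splitting statement, which I would then settle by an inductive application of the Hahn--Banach sandwich theorem. I would first dispose of the ``if'' direction, which is immediate: each summand $f_{\bbeta}$ in \eqref{eq:decomp1} is nonnegative on $Y$ by assumption, so their sum $f$ is nonnegative on $Y$.

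For the reduction I would record that $\bzero \in \cA$ forces $v_{\bzero} = 1$, so that the linear part $L(\bv) := \sum_{\balpha \in \cA} c_{\balpha} v_{\balpha}$ restricts to an affine function of the free coordinates $(v_{\balpha})_{\balpha \in \cA \setminus \{\bzero\}}$ on $Y$. For each $\bbeta \in \mathcal{B}$ I set $g_{\bbeta}(\bv) := -d_{\bbeta} \prod_{\balpha \in \cA} v_{\balpha}^{\lambda^{(\bbeta)}_{\balpha}}$; since $-d_{\bbeta} > 0$, the $\lambda^{(\bbeta)}_{\balpha}$ are nonnegative with $\sum_{\balpha} \lambda^{(\bbeta)}_{\balpha} = 1$, and $Y \subseteq \R_{>0}^{\cA \setminus \{\bzero\}}$, each $g_{\bbeta}$ is a positive multiple of a weighted geometric mean and is therefore concave on $Y$. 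With this notation $f(\bv) = L(\bv) - \sum_{\bbeta \in \mathcal{B}} g_{\bbeta}(\bv)$, so nonnegativity of $f$ on $Y$ is exactly the inequality $L \geq \sum_{\bbeta \in \mathcal{B}} g_{\bbeta}$ on $Y$. Writing $\ell_{\bbeta}(\bv) := \sum_{\balpha \in \cA} c^{(\bbeta)}_{\balpha} v_{\balpha}$, the decomposition \eqref{eq:decomp1} is then equivalent to finding affine functions $\ell_{\bbeta}$ with $\ell_{\bbeta} \geq g_{\bbeta}$ on $Y$ for every $\bbeta$ and $\sum_{\bbeta \in \mathcal{B}} \ell_{\bbeta} = L$: the first condition says $f_{\bbeta} = \ell_{\bbeta} - g_{\bbeta} \geq 0$ on $Y$, while the second encodes the coefficient identities $\sum_{\bbeta} c^{(\bbeta)}_{\balpha} = c_{\balpha}$.

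The core is therefore the claim, proved by induction on $|\mathcal{B}|$, that if $L$ is affine, the $g_{\bbeta}$ are concave on the convex set $Y$, and $L \geq \sum_{\bbeta} g_{\bbeta}$, then $L$ splits as $\sum_{\bbeta} \ell_{\bbeta}$ with each $\ell_{\bbeta}$ affine and $\ell_{\bbeta} \geq g_{\bbeta}$. Fixing one $\bbeta_0 \in \mathcal{B}$, the hypothesis gives $g_{\bbeta_0} \leq L - \sum_{\bbeta \neq \bbeta_0} g_{\bbeta}$ on $Y$, where the left-hand side is concave and the right-hand side is convex (an affine function minus a sum of concave functions). The Hahn--Banach sandwich theorem then yields an affine function $\ell_{\bbeta_0}$ with $g_{\bbeta_0} \leq \ell_{\bbeta_0} \leq L - \sum_{\bbeta \neq \bbeta_0} g_{\bbeta}$ on $Y$. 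The right inequality shows that $L - \ell_{\bbeta_0}$ is affine and dominates $\sum_{\bbeta \neq \bbeta_0} g_{\bbeta}$, so the inductive hypothesis applied to $L - \ell_{\bbeta_0}$ and the family $(g_{\bbeta})_{\bbeta \neq \bbeta_0}$ produces the remaining $\ell_{\bbeta}$, the base case $|\mathcal{B}| = 1$ being settled by $\ell_{\bbeta_0} := L$. Translating back, each affine $\ell_{\bbeta}$ is realized as a linear form $\sum_{\balpha} c^{(\bbeta)}_{\balpha} v_{\balpha}$ on $\R^{\cA}$ (using $v_{\bzero} = 1$ to absorb constants); choosing these representatives compatibly, for instance fixing all but one index and defining the last by $L - \sum_{\bbeta \neq \bbeta_0} \ell_{\bbeta}$, yields coefficients $c^{(\bbeta)}_{\balpha} \in \R$ with $\sum_{\bbeta} c^{(\bbeta)}_{\balpha} = c_{\balpha}$, hence the decomposition.

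I expect the main obstacle to be the careful invocation of the sandwich theorem on a set $Y$ that need be neither open nor closed nor bounded. The point is to separate the epigraph of the convex majorant from the hypograph of the concave minorant and to guarantee that the separating hyperplane is non-vertical, so that it is the graph of a genuine affine function; this uses that both sets contain a point over every $\bv$ in the relative interior of $Y$ together with the continuity of the geometric means $g_{\bbeta}$ on the positive orthant. A secondary, purely bookkeeping point is the verification that each $g_{\bbeta}$ is concave via $\lambda^{(\bbeta)}_{\balpha} \geq 0$ and $\sum_{\balpha} \lambda^{(\bbeta)}_{\balpha} = 1$, which is precisely where the hypothesis that $\cA$ is the vertex set of a simplex, so that every $\bbeta \in \conv(\cA)$ has well-defined barycentric coordinates, enters.
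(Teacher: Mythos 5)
Your argument is correct and matches the paper's proof in essence: the paper also peels off one term $d_{\bbeta_0}\prod_{\balpha}v_{\balpha}^{\lambda^{(\bbeta_0)}_{\balpha}}$ at a time by producing an affine function sandwiched between that concave minorant and the convex remainder, and iterates over $|\mathcal{B}|$. The only cosmetic difference is that the paper obtains the affine separator from Fenchel's duality theorem (Rockafellar, Theorem 31.1, with attainment of the dual supremum under the condition $\relint(\dom g)\cap\relint(\dom h)\neq\emptyset$), which is exactly the Hahn--Banach sandwich statement you invoke, and it addresses the non-verticality concern you raise via that same relative-interior hypothesis.
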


Note that in the case $X= \R^n$, the nonnegativity of $f$ implies
that all coefficients $c_{\balpha}$ are nonnegative, and similarly,
the nonnegativity of $f_{\bbeta}$ implies that all coefficients
$c^{(\bbeta)}_{\balpha}$ are nonnegative. These implications do not hold
anymore in the restricted situations.

\begin{bsp} For an example where the summands require some negative coefficient 
$c_{\balpha}^{(\beta)}$,
let $\mathcal{A} = \{(0,0,0)^T, (1,0,0)^T, (0,1,0)^T, (0,0,1)^T \}$,
$\mathcal{B} = \{(\frac{3}{5},\frac{1}{5},\frac{1}{5})^T, $
$  (\frac{1}{5},\frac{3}{5},\frac{1}{5})^T \}$ and
\[
  f \ = \ \frac{2}{5} + \frac{3}{5} \exp(x_3) 
  - \frac{1}{2} \exp\left( \frac{3}{5} x_1 + \frac{1}{5} x_2 + \frac{1}{5} x_3 \right)
  - \frac{1}{2} \exp\left( \frac{1}{5} x_1 + \frac{3}{5} x_2 + \frac{1}{5} x_3 \right) \, .
\]
Setting $x_i = \ln y_i$, we consider
$X = \{\by \mid x_1+x_2-x_3=1\} = \{\by \mid \ln y_1 + \ln y_2 - \ln y_3 = 1\}$.
Then the image of the set $X$ under the map 
$\bx \mapsto (\exp \langle \balpha, \bx \rangle)_{\alpha \in \cA \setminus \{ \bf 0\}}$
is the convex set 
$\{(y_1,y_2,y_3)^T \mid y_i > 0 \text{ for } 1 \le i \le 3\}$, hence $f$ is $\mathcal{A}$-convex.
Writing $f$ in the $y$-variables, denoted $\of$, gives
\[
  \of(\by) \ = \ \frac{2}{5} + \frac{3}{5} y_3 
  - \frac{1}{2} y_1^{3/5} y_2^{1/5} y_3^{1/5}
  - \frac{1}{2} y_1^{1/5} y_2^{3/5} y_3^{1/5},
\]
and thus the function $f(\by)$ is convex on $\R_+^{3}$, where
$\R_+$ denotes the set of nonnegative real numbers. 
Since $\of(1,1,1) = 0$ and the gradient
$\nabla \of(1,1,1) = (-2/5,-2/5,2/5)^T$ is proportional to the coefficients $(1,1,-1)^T$ of the
affine form defining $X$, we see that $(1,1,1)^T$ is the minimizer of $\of$ and thus 
${\bf 0}$ is the minimizer of $f$. Since ${\bf 0}$ is a root of $f$, the signomial
$f$ is nonnegative. Indeed, a possible decomposition allowing negative coefficients 
is $f=f_1 + f_2$ with the nonnegative functions
\begin{eqnarray*}
  f_1 & = & \frac{1}{10} + \frac{1}{5} \exp(x_1) + \frac{1}{5} \exp(x_3) 
    - \frac{1}{2} \exp\left( \frac{3}{5} x_1 + \frac{1}{5} x_2 + \frac{1}{5} x_3 \right) \, , \\
  f_2 & = & \frac{3}{10} - \frac{1}{5} \exp(x_1) + \frac{2}{5} \exp(x_3) 
    - \frac{1}{2} \exp\left( \frac{1}{5} x_1 + \frac{3}{5} x_2 + \frac{1}{5} x_3 \right) \, .
\end{eqnarray*}
Now assume there were also a decomposition $f=f_1+f_2$ with
\begin{eqnarray*}
  f_1 & = & c_0^{(1)} + c_3^{(1)} x_3 
    - \frac{1}{2} \exp\left( \frac{3}{5} x_1 + \frac{1}{5} x_2 + \frac{1}{5} x_3 \right) \, , \\
  f_2 & = & c_0^{(2)} + c_3^{(2)} x_3 
    - \frac{1}{2} \exp\left( \frac{1}{5} x_1 + \frac{3}{5} x_2 + \frac{1}{5} x_3 \right)
\end{eqnarray*}
and nonnegative coefficients $c_0^{(1)}, c_3^{(1)}, c_0^{(2)}, c_3^{(2)}$. In the $y$-coordinates,
we obtain $\nabla \overline{f_1}(1,$ $1,1) $ 
$= (-3/10, -1/10, c_3^{(1)} - 1/10)^T$, which shows that the gradient
cannot be proportional to $(1,1,-1)^T$ and thus $f_1$ cannot be a
nonnegative function
with a root at ${\bf 0}$.
\end{bsp}

We now introduce Fenchel's duality theorem and use it to proof Lemma \ref{le:decomp1}. 
This theorem is concerned with minimizing the sum of
two convex functions, or equivalently, with minimizing the difference of a convex function and a concave function. 
For each $\bbeta \in \mathcal{B}$, the function 
$\prod_{\balpha \in \mathcal{A}} v_{\balpha}^{\lambda^{(\bbeta)}_{\balpha}}$
is concave because each $\lambda_\balpha^{(\bbeta)}$ satisfies $\lambda_\balpha^{(\bbeta)}\leq 1$. Hence, the function $d_{\bbeta} \prod_{\balpha \in \mathcal{A}} v_{\balpha}^{\lambda^{(\bbeta)}_{\balpha}}$
is convex due to $d_{\bbeta}<0$. 
We use the following version from \cite[Theorem 31.1]{rockafellar-convex-analysis} of Fenchel's duality theorem, where we follow the 
convention from convexity theory to set a convex 
function $g$ outside of its effective domain $\dom g$ 
to $\infty$ and then consider $g$ on the whole ground space $\R^n$. 
A convex function
$g \to \R \cup \{-\infty, \infty\}$ is called \emph{proper}
if $g(\bx) > - \infty$ for all $\bx$ and there exists some
$\bx_0$ such that $g(\bx_0) < \infty$. Similarly, a concave function
$h$ is \emph{proper} if $-h$ is a proper convex function and we set
$\dom h = \{\bx \mid h(\bx) > -\infty\}$. 
For a convex function $g$, the 
\emph{convex conjugate function}
is $g^*(\by) = \sup_{\bx} (\by^T \bx - g(\bx))$ and for a concave function $h$, the 
\emph{concave conjugate function} 
is $h^*(\by) = \inf_{\bx} (\by^T \bx - h(\bx))$.

\begin{satz}[Fenchel's duality theorem]
\label{th:fenchel-duality-theorem}
Let $g$ be a proper convex function on $\R^n$ and 
let $h$ be a proper concave function
on $\R^n$. Further assume that $\relint(\dom g) \cap \relint(\dom h) \neq \emptyset$.
Then
\begin{equation}
\label{eq:fencheldual1}
  \inf_{\bx \in \R^n} (g(\bx) - h(\bx)) \ = \ \sup_{\by \in \R^n} (h^*(\by) - g^*(\by)),
\end{equation}
where $g^*$ is the convex conjugate function of $g$ and $h^*$ is the concave conjugate 
function of $h$. Moreover, the supremum is attained at some point 
$\by^* \in \R^n$.
\end{satz}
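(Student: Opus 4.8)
The plan is to split the claimed identity into two inequalities: the unconditional \emph{weak duality} inequality $\inf_{\bx}(g(\bx)-h(\bx)) \ge \sup_{\by}(h^*(\by)-g^*(\by))$, and the reverse inequality together with attainment, which is where the constraint qualification $\relint(\dom g)\cap\relint(\dom h)\neq\emptyset$ enters, via a separation argument in $\R^{n+1}$. First I would establish weak duality straight from the definitions of the conjugates: for all $\bx,\by$ the identity $g^*(\by)=\sup_{\bz}(\by^T\bz-g(\bz))$ gives $g(\bx)\ge \by^T\bx-g^*(\by)$, and $h^*(\by)=\inf_{\bz}(\by^T\bz-h(\bz))$ gives $h(\bx)\le \by^T\bx-h^*(\by)$; subtracting yields $g(\bx)-h(\bx)\ge h^*(\by)-g^*(\by)$, and taking the infimum on the left and the supremum on the right gives the inequality. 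This already disposes of the degenerate case: if $\mu:=\inf_{\bx}(g(\bx)-h(\bx))=-\infty$, then the right-hand side is $\le-\infty$, hence equals $-\infty$ and is attained trivially, so the substance of the theorem lies in the case of finite $\mu$.

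Assume therefore that $\mu$ is finite. The core idea is a geometric separation. Since $g(\bx)\ge h(\bx)+\mu$ for all $\bx$, the epigraph $\epi g=\{(\bx,s):s\ge g(\bx)\}$ and the hypograph $\{(\bx,s):s\le h(\bx)+\mu\}$ are nonempty convex subsets of $\R^{n+1}$ whose relative interiors are disjoint: a common relative interior point $(\bx,s)$ would satisfy $g(\bx)<s<h(\bx)+\mu$, forcing $g(\bx)<h(\bx)+\mu$ and contradicting $g\ge h+\mu$. By the proper separation theorem for convex sets \cite[Theorem 11.3]{rockafellar-convex-analysis}, these two sets can be properly separated by a hyperplane $H\subseteq\R^{n+1}$.

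The main obstacle, and the only place the hypothesis $\relint(\dom g)\cap\relint(\dom h)\neq\emptyset$ is genuinely needed, is ruling out that $H$ is \emph{vertical}, i.e.\ has a normal of the form $(\bw,0)$. If $H$ were vertical, its separation of $\epi g$ from the hypograph would descend to a proper separation of the projections $\dom g$ and $\dom h$ in $\R^n$; but by \cite[Theorem 11.3]{rockafellar-convex-analysis} applied in $\R^n$, two convex sets sharing a relative interior point cannot be properly separated, a contradiction. Hence $H$ is non-vertical and may be written as $H=\{(\bx,s):s=\by^{*T}\bx-\beta\}$ for some $\by^*\in\R^n$ and $\beta\in\R$.

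It remains to read off the dual optimum from the separation inequalities, which is routine. Proper non-vertical separation gives $g(\bx)\ge \by^{*T}\bx-\beta$ and $h(\bx)+\mu\le \by^{*T}\bx-\beta$ for all $\bx$; by the definitions of the conjugates these rearrange to $g^*(\by^*)\le\beta$ and $h^*(\by^*)\ge\beta+\mu$, whence $h^*(\by^*)-g^*(\by^*)\ge\mu$ (and both conjugate values are finite since $g,h$ are proper). Combined with weak duality, which forces $h^*(\by)-g^*(\by)\le\mu$ for every $\by$, this yields $h^*(\by^*)-g^*(\by^*)=\mu=\sup_{\by}(h^*(\by)-g^*(\by))$, proving both the identity \eqref{eq:fencheldual1} and attainment of the supremum at $\by^*$.
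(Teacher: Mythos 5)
The paper does not prove this statement at all: it is imported verbatim from Rockafellar \cite[Theorem 31.1]{rockafellar-convex-analysis}, so there is no internal proof to compare against. Your argument is a correct, self-contained proof, and it follows the classical route for Fenchel duality: weak duality from the definitions of the conjugates, then, for finite optimal value $\mu$, proper separation in $\R^{n+1}$ of $\epi g$ from the hypograph of $h+\mu$, with the constraint qualification $\relint(\dom g)\cap\relint(\dom h)\neq\emptyset$ used exactly once, to exclude a vertical separating hyperplane (since a vertical hyperplane would properly separate $\dom g$ from $\dom h$, which Theorem~11.3 forbids when their relative interiors meet). Reading off $g^*(\by^*)\le\beta$ and $h^*(\by^*)\ge\beta+\mu$ from the two half-space inequalities and combining with weak duality gives both the identity and attainment at $\by^*$; the finiteness bookkeeping ($g^*>-\infty$ and $h^*<+\infty$ everywhere by properness) is also handled correctly, as is the degenerate case $\mu=-\infty$.

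Two small points are glossed over but are routine: (i) the disjointness of the relative interiors uses the standard description $\relint(\epi g)=\{(\bx,s): \bx\in\relint(\dom g),\ s>g(\bx)\}$ (Rockafellar, Lemma~7.3), which you should cite rather than treat as obvious; (ii) after excluding verticality you still need the sign of the last coordinate of the normal: writing the normal as $(\bw,w_{n+1})$ with $w_{n+1}\neq 0$, the fact that $\epi g$ is unbounded upward forces $w_{n+1}<0$ for the orientation with $\epi g$ on the upper side, which is what licenses the normalization $H=\{(\bx,s): s=\by^{*T}\bx-\beta\}$ with the inequalities in the directions you use. Neither point is a gap in substance. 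Incidentally, Rockafellar's own proof of Theorem~31.1 runs through his perturbation/bifunction machinery (Section~30) rather than direct epigraph separation; your argument is the more elementary and self-contained of the two, at the cost of redoing the separation analysis that the perturbation framework packages away.
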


\begin{proof}[Proof of Lemma~\ref{le:decomp1}.]
First we assume that $|\mathcal{B}| = 2
$, that is, $f$ is of the form
\[
  f(\bv) \ = \ \sum_{\balpha \in \cA} c_{\balpha} v_{\balpha}
  + d_1 \prod_{\balpha \in \mathcal{A}} v_{\balpha}^{\lambda_{\balpha}}
  + d_2 \prod_{\balpha \in \mathcal{A}} v_{\balpha}^{\mu_{\balpha}} \, ,
\]
where we distinguish the two elements of $\sB$ by using $\lambda_\balpha$ and $\mu_\balpha$ as their respective barycentric coordinates. Since $\lambda_{\balpha}, \mu_{\balpha}
 \in [0,1]$, the functions 
 $\prod_{\balpha \in \mathcal{A}} v_{\balpha}^{\lambda_{\balpha}}$
 and
 $\prod_{\balpha \in \mathcal{A}} v_{\balpha}^{\mu_{\balpha}}$
 are proper concave functions on $Y$.
 We write $f$ as a difference of a proper convex and of a proper concave
 function, 
$f=g-h$, with
\[
  g(\bv) = \ \sum_{\balpha \in \cA} c_{\balpha} v_{\balpha}
    + d_1 \prod_{\balpha \in \mathcal{A}} v_{\balpha}^{\lambda_{\balpha}},
  \quad
  h(\bv) = - d_2 \prod_{\balpha \in \mathcal{A}} v_{\balpha}^{\mu_{\balpha}} \,
\]
and $\dom g = \dom h = Y$.
Let $\inf_{\bv} (g(\bv) - h(\bv)) \ge 0$.
By Fenchel's duality theorem~\ref{th:fenchel-duality-theorem}, we obtain
$\sup_{\by} (h^*(\by) - g^*(\by)) = \inf_{\bv} (g(\bv)-h(\bv)) \ge 0$,
where $g^*(\by)$ denotes the convex conjugate and $h^*(\by)$ is
the concave conjugate.

Since $\dom g = \dom h = Y \neq \emptyset$, we have
$\relint(\dom g) = \relint(\dom h) \neq \emptyset$ as well,
and hence the supremum is attained 
in~\eqref{eq:fencheldual1}.
Thus, there exists a vector $\by^*$ such that
$h^*(\by^*) - g^*(\by^*) \ge 0$. Hence,
\[
 \inf_{\bv} \Big\{ \sum_{\balpha \in \mathcal{A} \setminus \{\bzero\}} 
 y^*_{\balpha} v_{\balpha} 
  - h(\bv) \Big\} = h^*(\by^*) \ge g^*(\by^*) 
 = \sup_{\bv} \Big\{ \sum_{\balpha \in \mathcal{A} \setminus \{\bzero\}} y^*_{\balpha} v_{\balpha} - g(\bv) \Big\}.
\]
Setting $\gamma$ to the finite value $\gamma:=-h^*(\by^*)$, 
we have for all $\bv$ that
\[
  \sum_{\balpha \in \cA \setminus \{\bzero\}} y^*_{\balpha} v_{\balpha} + \gamma \ge h(\bv).
\]
Further, since 
$
 \inf_{\bv} (- \sum_ {\balpha \in \mathcal{A} \setminus \{\bzero\}} 
 y^*_{\balpha} v_{\balpha} + g(\bv)) \ge \gamma,
$
we obtain for all $\bx$ that
\[
 g(\bv) \ge \sum_{\balpha \in \mathcal{A} \setminus \{\bzero\}} y^*_{\balpha} v_{\balpha} + \gamma.
\]
Combining the inequalities gives
\[
  g(\bv) \ge \sum_{\balpha \in \mathcal{A}} y^*_{\balpha} v_{\balpha} + \gamma \ge h(\bv).
\]
Hence, we have for all $\bv$ that
\[
  f_1(\bv) \ := \ \sum_{\balpha \in \mathcal{A} \setminus \{\bzero\}} y^*_{\balpha} v_{\balpha} + \gamma - h(\bv) \ge 0
\]
and
\begin{eqnarray*}
  0 \le g(\bv) - \sum_{\balpha \in \mathcal{A} \setminus {\bzero}]} 
  y^*_{\balpha} v_{\balpha} - \gamma 
  = \sum_{\balpha \in \mathcal{A} \setminus \{\bzero\}} (c_{\balpha} - y^*_{\balpha}) v_{\balpha} + (c_{\bzero} - \gamma) 
  + \prod_{\balpha \in \cA} v_{\balpha}^{\lambda_{\balpha}} =: f_2(\bv).
\end{eqnarray*}
Since $f_1 + f_2 = f$, this provides the desired decomposition of our nonnegative
signomial into two nonnegative circuit signomials.

The case of more than two summands can be successively reduced to two
summands by splitting $f$ via $f = g-h$ into a convex function $g$ and
a concave function $h$ of the form
\[
  g(\bv) = \ \sum_{\balpha \in \cA} c_{\balpha} v_{\balpha}
    + \sum_{\bbeta \in \cB \setminus \{\bbeta_0\}} d_{\bbeta} 
    \prod_{\balpha \in \mathcal{A}} v_{\balpha}^{\lambda^{(\bbeta)}_{\balpha}},
  \quad
  h(\bv) = - d_{\bbeta_0} \prod_{\balpha \in \mathcal{A}} v_{\balpha}^{\lambda^{(\bbeta_0)}_{\balpha}} \,
\]
for some $\bbeta_0 \in \cB$ and then repeatedly splitting $g$
in the same way.
\end{proof}

Note that the proof of Lemma~\ref{le:decomp1} does not require explicit expressions of
the conjugate functions.

Using the description in terms of the exponential moment variables, the minimal
value of $f$ can be expressed in terms of a convex optimization
problem. For $\lambda_1, \ldots, \lambda_n > 0$ with 
$\sum_{i=1}^n \lambda_i=1$, we denote by 
$\Pow_{n+1}^{\lambda_1, \ldots, \lambda_n}$ a \emph{general power cone}
\[
  \Pow_{n+1}^{\lambda_1, \ldots, \lambda_n}
 \ = \ \{ (\bx,z) \mid
 x_1^{\lambda_1} \cdots x_n^{\lambda_n} \ge |z|, \,
 x_1, \ldots, x_n \ge 0 \},
\]
see, e.g., \cite{chares-thesis,mnt-2023,papp-2023}.
$\Pow_{n+1}^{\lambda_1, \ldots, \lambda_n}$ is a convex cone
in $(n+1)$-dimensional space.

The minimal value of $f$ over $\R^n$ can be expressed in terms of the following
convex program, where $n = |\cA \setminus \{\bzero\}|$ and $\blambda^{(\bbeta)} = (\blambda^{(\bbeta)}_{\balpha})_{\balpha \in \cA}$.
\[
  \begin{array}{rcl@{\quad}l}
    \multicolumn{4}{l}{\inf\limits_{\bv = (v_{\balpha})_{\balpha \in \cA}, \,
    (z_{\bbeta})_{\bbeta \in \cB}} \, \sum\limits_{\balpha \in \cA} c_{\balpha} v_{\balpha} + \sum\limits_{\bbeta \in \cB} d_{\bbeta} z_{\bbeta}} \\
     ((v_{\balpha})_{\balpha \in \cA},
     z_{\bbeta}) & \in & \Pow_{n+2}^{\blambda^{(\bbeta)}}, & \quad \bbeta \in \cB \, , \\
     v_{\bzero} & = & 1 \, .
  \end{array}
\]

If a description of the moment space $Y$ is known, then the decomposition can be computed by
expressing the minimal value of $f$ over $X$ as the convex program
\begin{equation}
  \label{eq:convex-program1}
  \begin{array}{rcl@{\quad}l}
    \multicolumn{4}{l}{\inf\limits_{\bv = (v_{\balpha})_{\balpha \in \cA}, \,
    (z_{\bbeta})_{\bbeta \in \cB}} \, \sum\limits_{\balpha \in \cA} c_{\balpha} v_{\balpha} + \sum\limits_{\bbeta \in \cB} d_{\bbeta} z_{\bbeta}} \\
     ((v_{\balpha})_{\balpha \in \cA},  
     z_{\bbeta}) & \in & \Pow_{n+2}^{\blambda^{(\bbeta)}}, & \quad \bbeta \in \cB \, , \\
     v_{\bzero} & = & 1 \, , \\
     (v_{\balpha})_{\balpha \in \cA \setminus \{\bzero\}} & \in & Y \, .   
  \end{array}
\end{equation}

\begin{bsp}\label{ex:solution-example}
	With the main theorem proven, we answer the question from Example \ref{ex: introduction} whether the function
	\begin{align*}
		f(x,y)=13+\exp(4x+2y)+\exp(2x+4y)-12\exp(x+y)-3\exp(2x+2y)
	\end{align*}
	is nonnegative over $X=\conv\{(-1,0)^T,(0,-1)^T,(0,0)^T\}$. 
	All the negative support points of $f$ are contained in the convex hull of the positive support points. Therefore, it suffices to calculate a decomposition as seen in Theorem \ref{th:main}. 
	For any $\gamma \in \bR$ such that $f-\gamma$ has a decomposition $f-\gamma=f_1+f_2$ with
	\begin{align*}
		&f_1=c_0^{(1)}+c_1^{(1)}\exp(4x+2y)+c_2^{(1)}\exp(2x+4y)-12\exp(x+y) \text{ and }\\ &f_2=c_0^{(2)}+c_1^{(2)}\exp(4x+2y)+c_2^{(2)}\exp(2x+4y)-3\exp(2x+2y)
	\end{align*} 
	and $c_j^{(1)}+c_j^{(2)}=c_j$ for $j\in \{1,2,3\}$, we can choose $f_1$ and $f_2$ to have the same global minimizer. The smallest value of $\gamma$ for which $f-\gamma$ is globally nonnegative is $\gamma=7$. The resulting $f_1$ and $f_2$ are given by
	\begin{align*}
		&f_1(x,y)=16+0.5\exp(4x+2y)+0.5\exp(2x+4y)-12\exp(x+y) \text{ and} \\ &f_2(x,y)=4+0.5\exp(4x+2y)+0.5\exp(2x+4y)-3\exp(2x+2y).
	\end{align*} 
	These signomials share the same global minimizer $(\ln(\sqrt{2}),\ln(\sqrt{2}))$. 
	By calculating the derivative of $f_1$ in $x$,
	\begin{align*}
		\partial_xf_1(x,y)=2\exp(4x+2y)+\exp(2x+4y)-12\exp(x+y),
	\end{align*}
	we can check that for every $x, y\in [-1,0]$ this is a negative value. This is also the case for the derivative in $y$ since $f$ is symmetric in $(x,y)$. 
	Therefore, the minimum of $f_1$ over $X$ is attained at $(0,0)$.
    By the same argument, it 
	follows that $f_2$ and therefore also $f$ attains its minimum over $X$ at $(0,0)$.
	This means $\min_{(x,y)\in X}f(x,y)=0$ and thus $f$ is nonnegative on $X$. 
\end{bsp}

In the one-dimensional case, the set $X$ is $\mathcal{A}$-convex if
and only if $X$ is convex. The following theorem shows that in the
one-dimensional case with positive coefficients $c_{\alpha}$, 
the coefficients $c_{\alpha}^{(\beta)}$ in
the decomposition of Theorem~\ref{th:main} can be restricted to be nonnegative.
Since in our main setup, $\sA$ is the vertex set of a simplex, we have in the one-dimensional case that $|\sA|\leq 2$, w.l.o.g.\ we can assume $|\sA|=2$.

\begin{satz}
\label{th:a-positive-coefficients}
Let $\mathcal{A} = \{\alpha_1,\alpha_2\} \subseteq \R$,
$\mathcal{B} \subseteq \conv(\cA) \setminus \cA$ and $X \subseteq \R$
convex.
Further let $f$ be a univariate signomial of the
form~\eqref{eq:signomial:main:theorem} with $c_{\alpha} > 0$ for 
$\alpha \in \cA$ and $d_\beta < 0$ for $\beta \in \cB$.
Then $f$ is nonnegative over $X$ if and only if there exists a decomposition 
of the form as in 
Theorem~\ref{th:main} such that the coefficients $c_{\alpha}^{(\beta)}$ are
nonnegative for $\beta \in \cB$, that is,
\begin{equation}
  \label{eq:decomp-univariate}
  f \ = \ \sum_{\beta \in \mathcal{B}} f_{\beta} \;
  \text{ with }f_{\beta}(x) = \sum_{\alpha \in \cA} c^{(\beta)}_{\alpha} 
  \e^\alpha + 
  d_{\beta} \e^{\beta}
\end{equation}
with coefficients $c_{\alpha}^{(\beta)} \ge 0$
for all $\alpha \in \cA$ and $\beta \in \mathcal{B}$ and $f_{\beta}$
is nonnegative over $X$.
Hence, $f$ is nonnegative if and only if $f$ is contained in
the $X$-SAGE cone $C_X(\cA,\cB)$.
\end{satz}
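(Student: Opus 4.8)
The plan is to prove the nontrivial implication ``$\Rightarrow$''; the converse is immediate, since each $f_{\beta}\in C_X(\cA,\beta)$ is nonnegative on $X$ and hence so is $f=\sum_{\beta}f_{\beta}$, and the closing sentence is just a restatement of $C_X(\cA,\cB)=\sum_{\beta}C_X(\cA,\beta)$. First I would normalize the problem. Writing $\alpha_1<\alpha_2$ and $\beta=(1-\lambda_{\beta})\alpha_1+\lambda_{\beta}\alpha_2$ with $\lambda_{\beta}\in(0,1)$ (as $\beta\in\conv(\cA)\setminus\cA$), I factor out $\exp(\alpha_1 x)$ and substitute $s=\exp((\alpha_2-\alpha_1)x)$, an increasing bijection from $X$ onto an interval $I\subseteq\R_{>0}$, under which $f(x)=\exp(\alpha_1 x)\,g(s)$ with
\[
  g(s)=c_{\alpha_1}+c_{\alpha_2}s-\sum_{\beta\in\cB}|d_{\beta}|\,s^{\lambda_{\beta}},\qquad \lambda_{\beta}\in(0,1).
\]
Since $\exp(\alpha_1 x)>0$, nonnegativity of $f$ on $X$ is equivalent to $g\ge 0$ on $I$, and any decomposition $g=\sum_{\beta}g_{\beta}$ with $g_{\beta}(s)=a_{\beta}+b_{\beta}s-|d_{\beta}|s^{\lambda_{\beta}}$, $a_{\beta},b_{\beta}\ge 0$, $g_{\beta}\ge 0$ on $I$, translates back into the desired decomposition of $f$ with $c^{(\beta)}_{\alpha_1}=a_{\beta}\ge 0$ and $c^{(\beta)}_{\alpha_2}=b_{\beta}\ge 0$. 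As each $s^{\lambda_{\beta}}$ is strictly concave, $g$ is convex; moreover $g(s)\to c_{\alpha_1}>0$ as $s\to 0^+$ and $g(s)\to+\infty$ as $s\to\infty$ (using $c_{\alpha_2}>0$), so after replacing $X$ by $\overline{X}$ (which changes neither the nonnegativity nor, as $\sigma_X=\sigma_{\overline{X}}$, the cone $C_X$) the infimum of $g$ on $I$ is attained at some $s^{\ast}\in I$ with $g(s^{\ast})=:m\ge 0$.

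Next I would build the circuits from tangent lines at $s^{\ast}$. For each $\beta$ let $T_{\beta}$ be the tangent line at $s^{\ast}$ to the concave function $\phi_{\beta}(s):=|d_{\beta}|s^{\lambda_{\beta}}$; then $T_{\beta}\ge\phi_{\beta}$ on all of $\R_{>0}$, and a direct computation gives slope $b^{0}_{\beta}=|d_{\beta}|\lambda_{\beta}(s^{\ast})^{\lambda_{\beta}-1}>0$ and intercept $a^{0}_{\beta}=|d_{\beta}|(s^{\ast})^{\lambda_{\beta}}(1-\lambda_{\beta})>0$, both strictly positive because $\lambda_{\beta}\in(0,1)$. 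Setting $\ell(s):=c_{\alpha_1}+c_{\alpha_2}s$ and $\psi:=\sum_{\beta}\phi_{\beta}$, the identity $\ell-\sum_{\beta}T_{\beta}=L$ holds, where $L$ is exactly the tangent line to $g=\ell-\psi$ at $s^{\ast}$ (the sum of tangents being the tangent of the sum). Convexity of $g$ together with the fact that $s^{\ast}$ minimizes $g$ on $I$ forces $L\ge 0$ on $I$ (the constant $m$ when $s^{\ast}$ is interior, and an affine function of the correct monotonicity at a boundary minimizer). Thus $\ell=\sum_{\beta}T_{\beta}+L$, each $T_{\beta}$ is a global majorant of $\phi_{\beta}$ with positive coefficients, and the only remaining task is to absorb the affine residual $L$ into the $T_{\beta}$ while keeping all coefficients nonnegative.

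The main obstacle is precisely this absorption, because $L$, although nonnegative on $I$, may have negative intercept or (at a boundary minimizer) force a change of slope, and so cannot simply be added to one circuit. I would handle it by a water-filling redistribution of slopes. Consider the boundary case $s^{\ast}=s_1=\min I$, where $g'(s^{\ast})\ge 0$ and $\sum_{\beta}b^{0}_{\beta}=\psi'(s^{\ast})=c_{\alpha_2}-g'(s^{\ast})$. I raise each slope to $b_{\beta}=b^{0}_{\beta}+\delta_{\beta}$ with $\delta_{\beta}\ge 0$ and $\sum_{\beta}\delta_{\beta}=g'(s^{\ast})$, so that $\sum_{\beta}b_{\beta}=c_{\alpha_2}$. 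For a fixed slope $b_{\beta}$, the least intercept making $a_{\beta}+b_{\beta}s\ge\phi_{\beta}(s)$ on $I$ equals $\max_{s\in I}(\phi_{\beta}(s)-b_{\beta}s)$, which is attained at $s_1$ (there $\phi_{\beta}(s)-b_{\beta}s$ is decreasing, since $\phi_{\beta}'$ is decreasing and $\phi_{\beta}'(s_1)=b^{0}_{\beta}\le b_{\beta}$) and equals $a^{0}_{\beta}-\delta_{\beta}s_1$; summing gives $\sum_{\beta}(a^{0}_{\beta}-\delta_{\beta}s_1)=c_{\alpha_1}-m$. Choosing the $\delta_{\beta}$ by water-filling to minimize $\sum_{\beta}\max(a^{0}_{\beta}-\delta_{\beta}s_1,0)$ yields the value $\max(c_{\alpha_1}-m,0)\le c_{\alpha_1}$, leaving enough intercept budget to select $a_{\beta}\ge\max(a^{0}_{\beta}-\delta_{\beta}s_1,0)\ge 0$ with $\sum_{\beta}a_{\beta}=c_{\alpha_1}$ (raising an intercept only improves domination). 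The lines $\ell_{\beta}=a_{\beta}+b_{\beta}s$ then have nonnegative coefficients, satisfy $\ell_{\beta}\ge\phi_{\beta}$ on $I$, and sum to $\ell$. The interior case is the special instance $g'(s^{\ast})=0$ (no slope slack, $m$ absorbed into the intercepts), and the right-boundary case $s^{\ast}=\max I$ reduces to the left-boundary case by the order-reversing involution $s\mapsto 1/s$, $g(s)\mapsto s\,g(1/s)$, which interchanges the two positive terms and sends each $\lambda_{\beta}$ to $1-\lambda_{\beta}\in(0,1)$.

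Finally, setting $g_{\beta}=\ell_{\beta}-\phi_{\beta}$ gives $g=\sum_{\beta}g_{\beta}$ with $g_{\beta}\ge 0$ on $I$ and nonnegative coefficients; translating back produces the decomposition $f=\sum_{\beta}f_{\beta}$ of~\eqref{eq:decomp-univariate} with every $f_{\beta}\in C_X(\cA,\beta)$, which is exactly the assertion $f\in C_X(\cA,\cB)$. I expect the feasibility bookkeeping in the water-filling step to be the only delicate point; the rest is the elementary convex geometry of tangent lines to the strictly concave power functions $s^{\lambda_{\beta}}$.
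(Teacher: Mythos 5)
Your proof is correct, but it takes a genuinely different route from the paper's. The paper first reduces to the case where $f$ vanishes somewhere on $\cl X$ and then splits into two cases: for a zero in the interior of $X$ it invokes Lemma~\ref{le:decomp1} to obtain a decomposition with real coefficients and uses Descartes' rule of signs to conclude that a three-term summand $f_\beta$ with a negative $c^{(\beta)}_\alpha$ cannot have a double zero, so the coefficients are automatically nonnegative; for a zero on the boundary it shows (again via Descartes) that $f$ is monotone on $X$ and writes down the explicit proportional split~\eqref{eq:proportional1}. You instead work directly in the moment coordinate $s=\exp((\alpha_2-\alpha_1)x)$, where $g$ is convex, take the tangent lines $T_\beta$ to the concave power terms $\phi_\beta$ at a minimizer $s^*$, and absorb the residual affine function $L$ (the tangent of $g$ at $s^*$) by redistributing slope and intercept budgets among the circuits. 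This is fully constructive, avoids both Fenchel duality (Lemma~\ref{le:decomp1}) and Descartes' rule, and treats all positions of the minimizer uniformly; the paper's argument is shorter given the machinery already in place. I checked the delicate points of your construction and they hold: the minimizer $s^*$ is attained on the closed image interval because $g'(s)\to-\infty$ as $s\to 0^+$ (here $\lambda_\beta\in(0,1)$ is essential) and $g\to\infty$ at infinity; the water-filling is feasible because the total of the minimal intercepts $\sum_\beta(a^0_\beta-\delta_\beta s_1)=c_{\alpha_1}-m$ is independent of the $\delta_\beta$ and one can always arrange either all terms nonnegative or all nonpositive; and in the reduction $s\mapsto 1/s$ the point $1/s^*$ really is a left-endpoint minimizer of $t\,g(1/t)$ on $1/I$, since $t\,g(1/t)\ge tm\ge m/s^*$ for $t\ge 1/s^*$, so no circularity arises. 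You should spell out these three verifications (and dispose of the degenerate cases $\cB=\emptyset$ and $|X|=1$) in a final write-up, but the argument is sound.
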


\begin{proof}
We can assume that $\alpha_1 = 0$ and $\alpha_1 < \alpha_2$.
The unconstrained case $X=\R$ is well known 
from~\cite{iliman-dewolff-resmathsci,mcw-newton-poly,wang-siaga-2022}, therefore we can 
assume that $\inf X > -\infty$ or $\sup X < \infty$. 
Without loss of generality it suffices
to consider one of these cases, say, $a:=\inf X > - \infty$. We can also assume that
$f$ has a zero $z$ in the topological closure $\cl X$.
By Lemma~\ref{le:decomp1}, there exists a 
decomposition $f=\sum_{\beta \in \cB} f_{\beta}$ of the form~\eqref{eq:decomp1}
with real coefficients $c_\alpha^{(\beta)}$.
If $z$ is contained in the interior of $X$, then the multiplicity
of $z$ must be at least two and even. 
Hence, in every $f_{\beta}$, the point $z$ must be a zero of even multiplicity at least two.
However, if there exists $\beta \in \cB$ and $\alpha \in \cA$ with $c_{\beta}^{(\alpha)} < 0$,
then $f_{\beta}$ cannot have a zero of order at least two by Descartes' rule of signs.

It remains to consider the case where $z$ is a boundary point of $X$. Without loss of
generality, we can assume $z = \inf X$. We claim that $f$ must be monotone increasing on $X$.
If $f$ were not monotone increasing on $X$, then there exists a point $z' \in X \setminus \{z\}$ 
such that the derivative $f'(z')$ vanishes. Applying Descartes' rule on $f'$ shows
that $z'$ must be a simple root of $f'$. Hence, using also $f(a) = 0$ and 
$\lim_{x \to \infty} f(x) = \infty$, there exists some constant $\gamma \in \R$
such that the signomial $f - \gamma$ has at least three zeroes on $\R$. Since $f-\gamma$
has at most two sign changes in its coefficient sequence, this contradicts
Descartes' rule. Hence, $f$ is monotone increasing on $X$. 
	We now can choose coefficients 
	\begin{align}
        \label{eq:proportional1}
		c_i^{(\beta)}=c_i \frac{-d_{\beta}\exp(\beta a)}{\sum_{\beta \in \sB}-d_\beta\exp(\beta a)}>0
	\end{align}
	for every $\beta \in \sB$ and $i\in \{1,2\}$.
	The decomposition 
	$
		f=\sum_{\beta \in \sB}f_\beta
	$
	with \begin{align}
        \label{eq:proportional2}
		f_\beta=c_1^{\beta}\e^{\alpha_1}+c_2^{\beta}\e^{\alpha_2}
		+d_\beta \e^{\beta}
	\end{align}
	gives the desired decomposition, because, by the same arguments
	as before, all $f_\beta$ are monotone and 
	\begin{eqnarray}
		f_\beta(a) \label{eq:proportional3}
		% &=&
		% c_1^{(\beta)}\exp(\alpha_1a)+c_2^{(\beta)}\exp(\alpha_2a)+d_\beta \exp(\beta a)\\
		&=&\frac{-d_{\beta}\exp(\beta a)}{\sum_{\beta \in \sB}-d_\beta\exp(\beta a)}(c_1\exp(\alpha_1 a)+c_2\exp(\alpha_2 a))+d_\beta\exp(\beta a) \nonumber \\
		&=&\frac{d_{\beta}\exp(\beta a)}{\sum_{\beta \in \sB}d_\beta\exp(\beta a)}\left(f(a)+\sum_{\beta \in \sB}(-d_\beta\exp(\beta a))\right)+d_\beta\exp(\beta a) \nonumber \\
		%&=&-d_\beta\exp(\beta a)\left(1+\frac{f(a)}{\sum_{\beta \in %\sB}-d_\beta\exp(\beta a)}\right)+d_\beta\exp(\beta a)\\
		&=&\frac{d_\beta\exp(\beta a)}{\sum_{\beta \in \sB}d_\beta\exp(\beta a)} f(a) \ = \ 0. \nonumber
	\end{eqnarray}
\end{proof}

We consider the generalization of Theorem \ref{th:a-positive-coefficients} where the set $\cB$ is possibly not contained
in $\conv(\cA) \setminus \cA \subseteq \R$. This also provides
a generalization of Theorem \ref{th:main} in the one-dimensional case where
positive support points in the interior of the Newton simplex are allowed.
If we only consider signomials with at most one negative term, the nonnegativity cone over $X$ clearly coincides with the signed $X$-SAGE cone by definition.

\begin{bsp}
	  \label{ex:one-dim1}
We examine the case with two negative terms to show that, even if all 
$c_{\alpha} > 0$
      for $\alpha \in \cA \subseteq \R$, the nonnegativity property of $f$
      does not coincide with the existence of a 
      decomposition of the form~\eqref{eq:signomial:main:theorem}
      when there is at least one $\beta\notin \conv(\sA)$.
        
	Let $\cA = \{1,3\}$ and $f = \sum_{i=0}^{3} c_i \e^{i}$ with $c_0,c_2 \le 0$ and $c_1, c_3 > 0$ on the set $X=\R_+$. The support points of $f$ are shown
	in Figure~\ref{fi:newton-polytope1}.
\ifpictures
\begin{figure}[t]
	\[
         \begin{tikzpicture}
             \begin{scriptsize}
                   \draw (-1,0) -- (4,0);
                   \draw [fill=green] (0,0) circle (2.5pt);
                   \draw[color=green] (0.2,0.2) node[black] {$\beta_1$};
                   \draw (0,-0.3) node {0};
                   \draw [fill=blue] (1,0) circle (2pt);
                   \draw[color=blue] (1.2,0.2) node[black] {$\alpha_1$};
                   \draw (1,-0.3) node {1};
                   \draw [fill=green] (2,0) circle (2.5pt);
                   \draw[color=green] (2.2,0.2) node[black] {$\beta_{2}$};
                   \draw (2,-0.3) node {2};
                   \draw [fill=blue] (3,0) circle (2.5pt);
                   \draw[color=blue] (3.2,0.2) node[black] {$\alpha_2$};
                   \draw (3,-0.3) node {3};
             \end{scriptsize}
             \end{tikzpicture}
\]

\caption{The support points of $f$ in Example~\ref{ex:one-dim1}}
\label{fi:newton-polytope1}
\end{figure}
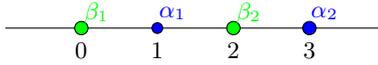
\fi
We can assume $c_3 = 1$.
Consider a subfamily of nonnegative signomials of the previous form
which have a double root, say, at $\ln(2)$. This
family is a one-dimensional family, and parametrizing it in terms of
$c_2$ gives
$f = \exp(3x) + c_2 \exp(2x) + (-12 - 4 c_2) \exp(x) + (16+4c_2) \, .
$
For $c_2 \in (-5,-4)$, the signs of the coefficients are as desired 
and $f(0) > 0$.
Hence, for $c_2 \in (-5,-4)$, the signomial $f$ describes a family 
of nonnegative signomials satisfying the sign constraints.

However, none of the signomials in this family has a decomposition
of the desired form (with $c^{(i)}_{\alpha} \in \R$).
If one of these signomials had such a certificate,
then there were a decomposition $f=f_1 + f_2$ with
$\supp f_1 \subseteq \{1,2,3\}$ and negative coefficient 
for the exponent 2, as well as $\supp f_2 \subseteq \{0,1,3\}$ and
negative coefficient for the exponent 0. By Descartes' rule,
$f_2$ cannot be nonnegative with a double root at $\ln(2)$.
Hence, for $c_2 \in (-5,-4)$, the signomial $f$ with the double root
at $\ln(2)$ does not have a decomposition of the desired form.
\end{bsp}

Even though, by Example~\ref{ex:one-dim1},
 in general the signed $X$-SAGE cone (with $X$ convex and $\mathcal{A}$-convex)
does not coincide with the signed nonnegativity cone over $X$ when $\bbeta \notin \conv(\sA)$, there are cases in which those two cones coincide. In the one-dimensional case, we can exactly characterize these cases.
In the case $|X| = 1$ (i.e., say, $X = \{a\}$ with some $a \in \R$),
a decomposition~\eqref{eq:proportional1} shows that the nonnegativity
cone coincides with the SAGE cone, so we can assume $|X| \ge 2$.
We start with the following definition.

\begin{defi}
	Let $\sA,\sB\subseteq \bR$ be two sets. We say $\sA$ separates $\sB$ if there exists $\alpha\in \sA$ and $\beta_1,\beta_2 \in \sB$ with $\beta_1<\alpha<\beta_2$.	 
\end{defi}

Let $\rec(X)=\{y\in \bR \ |\ x+\lambda y \in X \text{ for all } x\in X \text{ and for all } \lambda \geq 0 \}$ denote the \emph{recession cone} of $X$ and $\rec(X)^*$ its dual cone. For a convex set $X$ in $\R$, 
there are only four possibilities (recall that $X$ is assumed to be nonempty):
either $\rec(X) = \R$ (if $X = \R$) or $\rec(X) = \{0\}$ (if $X$ is bounded)
or $\rec(X) = [0,\infty)$ or $\rec(X) = (-\infty,0]$. 

\begin{satz}\label{onedim}
	For arbitrary $\alpha_1,\alpha_2\in \bR$ and $X \subseteq \R$ convex with $|X| \ge 2$,
	the $X$-$\SAGE$ cone $C_X(\{\alpha_1,\alpha_2\},\sB)$ coincides with the  nonnegativity cone if and only if
	$\beta \in  \conv\{\alpha_1,\alpha_2\}-\rec(X)^*$ for all 
	$\beta \in \mathcal{B}$
	and $\{\alpha_1,\alpha_2\}$ does not separate $\sB$.
\end{satz}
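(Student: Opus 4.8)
The plan is to substitute $t=\e^{x}$, so that each signomial supported on $\cA\cup\sB$ becomes a generalized polynomial $f(t)=c_{1}t^{\alpha_1}+c_{2}t^{\alpha_2}+\sum_{\beta\in\sB}d_\beta t^{\beta}$ on the interval $J=\{\e^{x}:x\in X\}\subseteq(0,\infty)$, with $X$-nonnegativity turning into nonnegativity on $J$. Assuming without loss of generality $\alpha_1<\alpha_2$, I would first reinterpret both conditions. Using the support-function form of the AGE condition in Theorem~\ref{th:nonneg-entropy-cond}(2) and the fact that the effective domain of $\sigma_X$ is the polar $-\rec(X)^{*}$, normalizing $\sum_\alpha\nu_\alpha=1$ shows that a circuit with a negative coefficient at $\beta$ can be nonnegative on $X$ if and only if $\beta-\bar\alpha\in-\rec(X)^{*}$ for some $\bar\alpha\in\conv\{\alpha_1,\alpha_2\}$, that is, exactly $\beta\in\conv\{\alpha_1,\alpha_2\}-\rec(X)^{*}$; concretely this means $t^{\beta}$ does not dominate $t^{\alpha_1},t^{\alpha_2}$ at the unbounded end(s) of $J$. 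The non-separation condition is equivalent to $\sB$ being contained in a single one of the three closed intervals $(-\infty,\alpha_1]$, $[\alpha_1,\alpha_2]$, $[\alpha_2,\infty)$; after absorbing any terms with $d_\beta\ge 0$ into the positive part, the coefficient sequence of $f$ ordered by exponent then has at most two sign changes. I also record that, as noted before the theorem, a single negative term is trivial, so the content lies with several $\beta$.

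For the sufficiency direction I would extend the Descartes-rule argument of Theorem~\ref{th:a-positive-coefficients}. Using the three-interval localization I split into the inner case $\sB\subseteq[\alpha_1,\alpha_2]$ and the two outer cases, observing that condition~1 forces each genuinely occurring outer case to sit at a bounded end of $J$ (for instance $\sB\subseteq[\alpha_2,\infty)$ can occur only when $J$ is bounded above), so the negative terms are dominated there automatically. In every case I normalize a zero $z\in\cl J$ of $f$, reducing a strictly positive $f$ to this situation by subtracting a constant and invoking closedness of the $X$-SAGE cone. The at-most-two sign changes together with Descartes' rule then force $f$ either to be monotone on $J$ with its extremum at a boundary point, or to carry a single interior double zero. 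In the monotone/boundary case I produce the decomposition by the proportionality choice \eqref{eq:proportional1}, $c_i^{(\beta)}=c_i(-d_\beta t_0^{\beta})/\sum_{\beta}(-d_\beta t_0^{\beta})\ge 0$ at the extremal endpoint $t_0$, so that each $f_\beta$ shares the endpoint zero and inherits monotonicity, hence nonnegativity; in the interior-double-zero case I apply Lemma~\ref{le:decomp1} to obtain a real-coefficient circuit decomposition and rule out negative $c_\alpha^{(\beta)}$ exactly as in Theorem~\ref{th:a-positive-coefficients}, since by Descartes a circuit whose coefficient sequence changes sign cannot support a double zero.

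For necessity I would argue by contraposition and construct nonnegative signomials outside the $X$-SAGE cone. If $\{\alpha_1,\alpha_2\}$ separates $\sB$, say $\beta_1<\alpha_i<\beta_2$ with $\beta_1,\beta_2\in\sB$, I mimic Example~\ref{ex:one-dim1}: choosing negative coefficients at $\beta_1,\beta_2$ and positive ones at $\alpha_1,\alpha_2$ yields an $f$ with an interior double zero, and in any decomposition $f=\sum_\beta f_\beta$ the summand carrying $\e^{\beta_1}$ (or $\e^{\beta_2}$) has a sign sequence that changes too often to admit a double zero, so $f$ is nonnegative but not $X$-SAGE. If instead condition~1 fails for some $\beta^{\ast}$, then $d_{\beta^{\ast}}\ge 0$ is forced in every nonnegative signomial, so $\e^{\beta^{\ast}}$ can act only as positive support beyond $\conv(\cA)$; taking a further exponent $\beta'\in\sB$ strictly between $\conv(\cA)$ and $\beta^{\ast}$, I build $f$ in which a positive $\beta^{\ast}$-term dominates a negative $\beta'$-term (the toy shape $t^{\beta^{\ast}}-t^{\beta'}$ on the relevant $J$). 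Such $f$ is nonnegative, but the negative $\beta'$-term can only be certified by a circuit with positive support $\cA$, which cannot dominate $t^{\beta'}$ at that end, so no $X$-SAGE decomposition exists.

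I expect the main difficulty to be the necessity direction: organizing the case distinction on the type of $X$ (bounded, a half-line to the left or right, or $\R$) together with the position of $\sB$, and verifying in each case that the constructed $f$ genuinely leaves the $X$-SAGE cone, since this rests on pinning down the precise zero and monotonicity structure of the individual circuits through Descartes' rule. A delicate secondary point is the interaction with the single-negative-term phenomenon for condition~1: an isolated violation at one $\beta^{\ast}$ with no admissible $\beta'$ collapses to the trivial coincidence, so the counterexample must be produced only when such a supporting pair is present, and the degenerate configurations $\beta=\alpha_i$ and $|X|=1$ must be absorbed or excluded.
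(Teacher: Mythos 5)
Your sufficiency argument follows the paper's route essentially verbatim: the inner case $\sB\subseteq[\alpha_1,\alpha_2]$ is Theorem~\ref{th:a-positive-coefficients} (Lemma~\ref{le:decomp1} plus Descartes to exclude negative $c_\alpha^{(\beta)}$ at a double zero, the proportional split~\eqref{eq:proportional1} at a boundary zero), and the outer cases are handled exactly as in Lemma~\ref{lem:Hilf1} by monotonicity and the same proportional split. One remark on condition~1: the paper does not construct counterexamples when it fails; it simply records (after the theorem statement) that $\beta\in\conv\{\alpha_1,\alpha_2\}-\rec(X)^*$ is necessary for any signomial with $d_\beta<0$ to be nonnegative on $X$, and Lemma~\ref{le:limitation:of:generalization} assumes condition~1. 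Your attempted counterexample for a failing $\beta^{\ast}$ (a positive $\beta^{\ast}$-term dominating a negative $\beta'$-term) is therefore extra machinery, and as you yourself concede it does not close when no admissible $\beta'$ exists; this part should be replaced by the paper's observation rather than patched.

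The genuine gap is in your necessity construction for the separation condition. You propose to always realize the counterexample as a nonnegative $f$ with an \emph{interior} double zero and then kill the decomposition by Descartes. This works in the configuration $\beta_1<\alpha_1<\beta_2<\alpha_2$ (and its mirror), which is what Example~\ref{ex:one-dim1} exhibits, but it is provably impossible in the remaining separating configuration $\beta_1<\alpha_1<\alpha_2<\beta_2$. There condition~1 forces $\rec(X)^*$ to contain both positive and negative numbers, hence $X$ is bounded; the sign pattern of $f$ in $t=\e^{x}$ is $-,+,+,-$ with two sign changes, and $f(t)$ is negative as $t\to 0^+$ and as $t\to\infty$. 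An interior double zero would exhaust the Descartes bound of two roots, so $f$ could not change sign and would be nonpositive everywhere, contradicting $f\ge 0$ on a set with $|X|\ge 2$. So in this case no nonnegative $f$ of the required sign pattern has an interior zero at all, and your construction produces nothing. The paper's Lemma~\ref{le:limitation:of:generalization} avoids this by placing the zero of $f$ on the boundary of $X$ and exploiting that the $\beta_1$-circuit is forced to be monotone increasing, hence to vanish at $\inf X$, which in turn forces $f_{\beta_2}(\inf X)=0$ and then $f_{\beta_2}(b)<0$; in the opposite-sides configuration one needs a boundary construction as well (e.g.\ $f$ vanishing at both endpoints of $X$, so that the $\beta_1$-summand would need two zeros against a single sign change). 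A minor point in the same vein: your phrase that the summand's sign sequence ``changes too often to admit a double zero'' is backwards --- the circuit $c_1^{(\beta_1)}\e^{\alpha_1}+c_2^{(\beta_1)}\e^{\alpha_2}+d_{\beta_1}\e^{\beta_1}$ has only \emph{one} sign change, which is too few to support a root of multiplicity two.
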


The minus sign in $\beta \in  \conv\{\alpha_1,\alpha_2\}-\rec(X)^*$ denotes the Minkowski difference. Note that this
precondition on every $\beta$ is a necessary condition for $f$ to be nonnegative on $X$ \cite{theobald-amgm-survey}.

We prove the theorem through the following two lemmas. The first one shows that the condition on $\sB$ is sufficient for a nonnegative signomial to be $X$-SAGE. The second lemma states that if $\sA$ separates $\sB$ 
and	$\beta \in  \conv\{\alpha_1,\alpha_2\}-\rec(X)^*$ for all 
	$\beta \in \mathcal{B}$
then there are nonnegative signomials which are not $X$-SAGE. Note that in the one-dimensional case, $X$ is convex if and only if $X$ is $\cA$-convex.
\begin{lemm}\label{lem:Hilf1}
	Let $f=c_1\e^{\alpha_1}+c_2\e^{\alpha_2}+\sum_{\beta \in \sB}d_\beta \e^{\beta}$ be a signomial in one variable with $c_i\geq 0$ for $i\in {1,2}$ and $d_{\beta}<0$ for $\beta \in \sB$ such that $\{\alpha_1,\alpha_2\}$ does not separate $\sB$. Then $f$ is nonnegative on a convex set $X$ if and only if $f$ is $X$-$\SAGE$.
\end{lemm}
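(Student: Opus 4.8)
The plan is to establish both implications, the forward one being free: if $f\in C_X(\{\alpha_1,\alpha_2\},\sB)$ then $f$ is a Minkowski sum of functions nonnegative on $X$ and hence nonnegative on $X$. So only the converse needs an argument, and here I would first normalize. Multiplying $f$ by the positive function $\e^{-\alpha_1}$ shifts every exponent by $-\alpha_1$ and preserves nonnegativity on $X$, the non-separation hypothesis, and membership in the SAGE cone; so I may assume $\alpha_1=0<\alpha_2=:A$ and, after the substitution $t=\e^x$, work with $\tilde f(t)=c_1+c_2t^A+\sum_{\beta\in\sB}d_\beta t^\beta\ge 0$ on the interval $I:=\exp(X)\subseteq(0,\infty)$. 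The point of non-separation is that it forces $\sB$ to lie entirely in one of the three regions $(-\infty,0]$, $[0,A]$, $[A,\infty)$: two points of $\sB$ on opposite strict sides of $0$ or of $A$ would separate $\{\alpha_1,\alpha_2\}$. The first and third regions are interchanged by $x\mapsto -x$ (i.e.\ $t\mapsto 1/t$), which preserves all hypotheses, so it suffices to treat the interior case $\sB\subseteq[0,A]$ and one exterior case, say $\sB\subseteq[A,\infty)$.

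In the interior case $\sB\subseteq\conv\{\alpha_1,\alpha_2\}\setminus\{\alpha_1,\alpha_2\}$ the assertion is precisely Theorem~\ref{th:a-positive-coefficients}, provided $c_1,c_2>0$. Since $c_1+c_2>0$, at most one coefficient vanishes; I would dispatch such a degenerate case by replacing the vanishing $c_i$ with $\varepsilon>0$, applying Theorem~\ref{th:a-positive-coefficients} to the (still nonnegative) perturbation, and letting $\varepsilon\to 0$, invoking closedness of $C_X(\sA,\sB)$ (alternatively, with only one positive term present an explicit one-line decomposition works).

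The exterior case $\sB\subseteq[A,\infty)$ is the genuinely new ingredient, and I expect it to be the main obstacle. Here the coefficients of $\tilde f$, read in increasing order of exponent, have the sign pattern $+,+,-,\dots,-$, i.e.\ a single sign change. By Descartes' rule $\tilde f$ has at most one positive zero counted with multiplicity; as $\tilde f(t)>0$ for small $t>0$ and $\tilde f(t)\to-\infty$ for $t\to\infty$ (the largest exponent $\max\sB>A$ carries a negative coefficient), there is exactly one positive zero $z_0$, it is simple, and $\tilde f>0$ on $(0,z_0)$, so in particular $I\subseteq(0,z_0]$. I then set $w_\beta:=\dfrac{-d_\beta z_0^{\beta}}{c_1+c_2z_0^{A}}>0$; the relation $\tilde f(z_0)=0$ rewrites as $\sum_{\beta}(-d_\beta z_0^{\beta})=c_1+c_2z_0^{A}$ and hence $\sum_{\beta}w_\beta=1$. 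Defining $f_\beta:=w_\beta\bigl(c_1\e^{\alpha_1}+c_2\e^{\alpha_2}\bigr)+d_\beta\e^{\beta}$ gives $\sum_{\beta}f_\beta=f$ with nonnegative coefficients on $\sA$; moreover each $f_\beta$ again has a single sign change and, by construction, vanishes at $z_0$, so it is nonnegative on $(0,z_0]\supseteq I$. Thus $f_\beta\in C_X(\sA,\beta)$ and $f\in C_X(\sA,\sB)$.

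The crux is the observation that non-separation collapses the exterior sign pattern to a single change: this simultaneously pins down the unique positive zero $z_0$ and certifies nonnegativity of every building block on all of $I$ rather than merely at one point. The resulting decomposition is the analogue of the boundary construction~\eqref{eq:proportional1}--\eqref{eq:proportional3} in Theorem~\ref{th:a-positive-coefficients}, with the global zero $z_0$ replacing the boundary point $a$; the one subtlety is that $z_0$ may lie outside $\cl X$, which is exactly why the single-sign-change positivity—and not a boundary-monotonicity argument—is what guarantees $f_\beta\ge 0$ on $I$.
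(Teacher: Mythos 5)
Your proposal is correct, and for the decisive exterior case it takes a genuinely different route from the paper's. The paper normalizes that case to $\beta<\alpha_1<\alpha_2$ (the mirror image of your $\alpha_1<\alpha_2<\beta$), computes $f'=c_2\alpha_2\e^{\alpha_2}+\sum_\beta d_\beta\beta\e^{\beta}>0$ using $d_\beta\beta>0$, concludes that $f$ is monotone increasing, that $X$ must be bounded below with $a=\inf X$ finite, and that the infimum of $f$ over $X$ is attained at $a$; it then distributes the positive part among the $f_\beta$ in proportion to $-d_\beta\exp(\beta a)$, exactly as in \eqref{eq:proportional1}--\eqref{eq:proportional2}, so that each (again monotone) $f_\beta$ is nonnegative at $a$ and hence on $X$. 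You anchor the same proportional splitting at the unique global zero $z_0$ of $f$, whose existence, simplicity, and the sign behaviour $f>0$ on $(0,z_0)$, $f<0$ on $(z_0,\infty)$ you extract from Descartes' rule via the single sign change $+,+,-,\dots,-$, and you certify nonnegativity of each $f_\beta$ on $I\subseteq(0,z_0]$ by the same single-sign-change argument rather than by monotonicity. Both mechanisms are valid; yours derives the one-sided boundedness of $X$ as a corollary rather than asserting it and does not care whether the anchor point lies in $\cl X$, while the paper's reuses verbatim the boundary construction already set up for Theorem~\ref{th:a-positive-coefficients}.

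Two small caveats. First, you rightly notice that Theorem~\ref{th:a-positive-coefficients} is stated for $c_\alpha>0$ while the lemma allows $c_i\ge 0$ (a point the paper's own proof glosses over); of your two fixes, prefer the explicit decomposition for a vanishing coefficient, since closedness of the Minkowski sum $C_X(\sA,\sB)=\sum_{\beta}C_X(\sA,\beta)$ is not something you may invoke for free. Second, your existence argument for $z_0$ tacitly assumes $\sB\neq\emptyset$ and $\max\sB>\alpha_2$; both hold in the nontrivial exterior case once $\sB$ is taken disjoint from $\{\alpha_1,\alpha_2\}$, as the paper's standing conventions permit, but it is worth saying.
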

This lemma does not only cover the univariate
case of Theorem \ref{th:main}, but also the case where the negative exponents are not contained in the convex hull of the positive ones.
\begin{proof}
  The ``if'' direction is clear, and we only have to show the 
  ``only if'' direction.
	The case $\alpha_1 < \beta < \alpha_2$ for every $\beta \in \sB$ holds by Lemma~\ref{th:a-positive-coefficients}. 
We only need to consider the case where $\beta < \alpha_1 <\alpha_2$ for every $\beta \in \sB$. The case $\alpha_1 <\alpha_2< \beta$ is analogous 
	by considering $f(-x)$ instead of $f(x)$. For $f$ to be nonnegative on $X$, the set $X$ needs to be bounded in the direction of $-\infty$.
    Then $a \coloneqq \inf(X) \in \R$. 
	By the remarks at the beginning of the section, 
    we can choose without loss of generality $\alpha_1=0$. To calculate the minimum of $f$ on $X$ we use the derivative
	\begin{align*}
		f'=c_2\alpha_2\e^{\alpha_2}+\sum_{\beta \in \sB} d_\beta \beta \e^{\beta}>0,
	\end{align*}
    because $d_{\beta} < 0$ and $\beta < 0$.
	Hence, the infimum is attained at $a$.
	We now can choose coefficients as in~\eqref{eq:proportional1}
	for every $\beta \in \sB$ and $i\in \{1,2\}$.
	The decomposition $
		f=\sum_{\beta \in \sB}f_\beta
	$
	with $f_\beta$ defined as in~\eqref{eq:proportional2}
	gives the desired decomposition because all $f_\beta$ are monotone and we can 
    use the derivation from~\eqref{eq:proportional1}.
\end{proof}

	\begin{bem}
		Not all the functions considered in Lemma \ref{lem:Hilf1} are convex. For example, $f(x)=1+\exp(x)-\exp(-x)-\exp(-2x)$ with the set $X=[0,b]$ for any $b\in \bR_{>0}$ satisfies all the necessary conditions, but $f''(0)=-4$.
	\end{bem}

Then the ``only if'' direction in Theorem~\ref{onedim}
follows from the following lemma.

\begin{lemm}
	\label{le:limitation:of:generalization}
	Let $X \subseteq \R$ be convex with $|X| \ge 2$ and assume
     $\beta \in  \conv\{\alpha_1,\alpha_2\}-\rec(X)^*$ for every $\beta \in \mathcal{B}$.       
    If $\{\alpha_1,\alpha_2\}$ separates $\sB$ then
	there exist coefficients $c_1,c_2>0$ and $d_\beta \le 0$ for $\beta \in \sB$
    such that the univariate signomial
    \begin{equation}
     \label{eq:f5}
     f \ = \ c_1\e^{\alpha_1}+c_2\e^{\alpha_2}+\sum_{\beta \in \sB}d_\beta \e^{\beta}
   \end{equation}
    is nonnegative on $X$, but
    $f\notin C_X(\{\alpha_1,\alpha_2\},\sB)$.
\end{lemm}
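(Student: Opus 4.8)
The plan is to exhibit an explicit witness $f$ that is nonnegative on $X$ but fails every signed SAGE decomposition, the failure being detected by Descartes' rule of signs for signomials (a nonzero signomial has at most as many real zeros, counted with multiplicity, as there are sign changes in its coefficient sequence ordered by exponent).

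First I would normalize. Applying $x\mapsto -x$ and multiplying $f$ by the positive factor $\e^{-\alpha_1 x}$ (both preserve nonnegativity, the signed SAGE cone, and all hypotheses), I may assume $\alpha_1=0<\alpha_2$ and that the separation occurs at $\alpha_1$, i.e.\ there are $\beta_1,\beta_2\in\sB$ with $\beta_1<0<\beta_2$. I will activate only these two negative terms, setting $d_\beta=0$ for the remaining $\beta\in\sB$ (allowed since $d_\beta\le 0$). For such an $f$, matching the coefficient of each $\e^{\beta}$ shows that membership in $C_X(\{\alpha_1,\alpha_2\},\sB)=\sum_\beta C_X(\{\alpha_1,\alpha_2\},\beta)$ reduces to a decomposition $f=f_{\beta_1}+f_{\beta_2}$ with $f_{\beta_i}\in C_X(\{\alpha_1,\alpha_2\},\beta_i)$: the summands for the inactive $\beta$ lie in $\cone\{\e^{\alpha_1},\e^{\alpha_2}\}\subseteq C_X(\{\alpha_1,\alpha_2\},\beta_1)$ and can be absorbed. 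Finally, feeding $\beta_1<0$ (and, if present, $\beta_2>\alpha_2$) into the precondition $\beta\in\conv\{\alpha_1,\alpha_2\}-\rec(X)^*$ forces $\rec(X)^*$ to contain positive (resp.\ negative) numbers; hence $X$ is bounded below, and it is bounded on both sides exactly when both $\beta_1,\beta_2$ lie outside $\conv\{\alpha_1,\alpha_2\}$.

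The obstruction then works as follows. If $f\ge 0$ on $X$ and $f(z)=0$, then in any decomposition $f=f_{\beta_1}+f_{\beta_2}$ with $f_{\beta_i}\ge 0$ on $X$ both summands vanish at $z$; if moreover $z\in\INT(X)$, then $z$ is a local minimum of each $f_{\beta_i}$, so $f_{\beta_i}'(z)=0$ and $z$ is a zero of multiplicity $\ge 2$. Now $f_{\beta_1}$ has support $\{\beta_1,0,\alpha_2\}$ with $\beta_1<0<\alpha_2$ and coefficient signs $(-,+,+)$, i.e.\ a single sign change, so by Descartes' rule it has at most one real zero (with multiplicity). Since the coefficient of $\e^{\beta_1}$ in $f$ is the chosen $d_{\beta_1}<0$, the summand $f_{\beta_1}$ is nonzero, and it can carry neither a double interior zero nor two distinct zeros — this is the contradiction I will engineer. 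Accordingly, when exactly one of $\beta_1,\beta_2$ lies outside $\conv\{\alpha_1,\alpha_2\}$ (say $\beta_1<0$, $\beta_2\in(0,\alpha_2)$), I start from a standard nonnegative circuit on $\{0,\beta_2,\alpha_2\}$ with a double root at an interior point $w$ of $X$, add a small term $-p\,\e^{\beta_1}$, and re-impose $f(w)=f'(w)=0$ by perturbing the coefficients at $0$ and $\alpha_2$; for $p$ small all signs are as required and $f$ stays nonnegative on the (bounded-below) domain, since the circuit is strictly positive off $w$ there and dominated by $c_2\,\e^{\alpha_2}$ at $+\infty$, while $f_{\beta_1}$ would need the forced double root at $w$. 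When both $\beta_1<0$ and $\beta_2>\alpha_2$ lie outside (so $X$ is bounded, $\tilde X=[\ell,m]$ in $u=\e^x$), I instead produce $f$ with simple zeros at both endpoints: the sign pattern $(-,+,+,-)$ has exactly two sign changes, so solving the $2\times 2$ system $f(\ell)=f(m)=0$ in the correct sign orthant gives a signomial whose only zeros are $\ell,m$, hence $f>0$ strictly inside and $f\ge 0$ on $X$, whereas $f_{\beta_1}$ would need two distinct zeros.

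The hard part will be the existence-with-correct-signs step in each construction: verifying that the perturbed double-root signomial keeps $c_1,c_2>0$ and $d_2<0$ and remains globally nonnegative on the possibly unbounded domain, and that the two-endpoint system is solvable with $c_1,c_2>0$, $d_1,d_2<0$ and no spurious interior zero. Both reduce to elementary sign bookkeeping — a smallness/continuity argument around the limiting nonnegative circuit in the first case, and Descartes' exact root count together with an explicit linear solve in the second — but they are where all the care is needed; the remaining Descartes contradiction is then immediate.
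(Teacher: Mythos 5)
Your proposal is correct, but it takes a genuinely different route from the paper. The paper's proof is a single uniform construction: after normalizing to $\beta_1<\alpha_1=0<\beta_2<\alpha_2$ and $a=\inf X$ finite, it fixes two points $a<b$ of $X$, chooses $c_2$ and $d_{\beta_2}$ explicitly so that $c_2(\e^{\alpha_2 a}-\e^{\alpha_2 b})+d_{\beta_2}(\e^{\beta_2 a}-\e^{\beta_2 b})=1>0$, makes $d_{\beta_1}$ very negative and tunes $c_1$ so that $f$ vanishes exactly at $a$; then in any signed decomposition the summand $f_{\beta_1}$ is strictly increasing, forcing $f_{\beta_2}(a)=0$, while $c_2^{(\beta_2)}\le c_2$ forces $f_{\beta_2}(a)>f_{\beta_2}(b)$ --- a two-point evaluation contradiction with no appeal to Descartes' rule. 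Your proof instead engineers the root structure (a double interior root, or simple roots at both endpoints of a bounded $X$) and kills the decomposition by Descartes' rule applied to the summand $f_{\beta_1}$, whose sign pattern $(-,+,+)$ admits at most one zero with multiplicity. Both obstruction mechanisms are sound; yours is conceptually closer to the Descartes arguments the paper uses elsewhere (Theorem~\ref{th:a-positive-coefficients}, Example~\ref{ex:one-dim1}), at the price of a case split and two nontrivial existence verifications. I checked the two steps you flag as delicate and they do go through: in case~1 the linear system for $(\epsilon_1,\epsilon_2)$ gives $\epsilon_1>0$, $\epsilon_2=O(p)<0$, and a compactness-plus-tail estimate (using that $\e^{\beta_1 x}$ is bounded on $[a,\infty)$ and $\alpha_2$ is the top exponent) yields nonnegativity for small $p$; in case~2 the solvability in the sign orthant $(-,+,+,-)$ reduces, after writing the two-zero space as $s u_1+t u_2$ with $u_1,u_2$ the three-term kernels, to the inequality $a'c<bb'$, which follows from positivity of generalized Vandermonde determinants (it collapses to $(1-\e^{\alpha_2 m})(\e^{\beta_2 m}-\e^{\beta_1 m})<0$ after translating $\ell$ to $0$). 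One point in your favor worth noting: your case~2 explicitly treats the configuration $\beta_1<\alpha_1<\alpha_2<\beta_2$ with $X$ bounded, which the paper's stated normalization $\beta_1<\alpha_1<\beta_2<\alpha_2$ does not literally reach (although the paper's coefficient choice happens to remain valid there, since boundedness of $X$ keeps the infimum defining $c_1$ finite).
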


\begin{proof} Since $\{\alpha_1, \alpha_2\}$ separates $\sB$, we have $|\sB| \ge 2$
and we can restrict to a two-element set $|\sB| = \{\beta_1,\beta_2\}$ by setting 
all other $d_{\beta}$ to be zero.

 We can exclude the case $X=\R$, since in that case the condition
$\beta \in \conv \{\alpha_1,\alpha_2\} - \rec(X)^* =\conv \{\alpha_1,\alpha_2\}$
for $\beta \in \sB$
implies that $\{ \alpha_1, \alpha_2\}$ does not separate $\sB$. Hence, either
$\inf(X)$ or $\sup(X)$ is finite. Without loss of generality assume that 
$a:=\inf(X)$ is finite. The separation precondition then allows to assume
$\beta_1 < \alpha_1 < \beta_2 < \alpha_2$. Moreover, we can assume $\alpha_1 = 0$.

Let $b\in X\setminus\{a\}$ be an arbitrary point.
Set
$c_2=\frac{-1}{\exp(\alpha_2a)-\exp(\alpha_2b)}>0$,
$d_{\beta_2}=\frac{2}{\exp(\beta_2a)-\exp(\beta_2b)}<0$
and choose some
\[
  d_{\beta_1} < \min \left\{ \frac{-(c_2\exp(\alpha_2a)+d_{\beta_2}\exp(\beta_2a))}{\exp(\beta_1a)},
  \, 0\right\} \, .
\]
Further set
\begin{equation}
  \label{eq:c1}
  c_1 \ = \ -\inf_{x\in X}\{c_2\exp(\alpha_2x)+d_{\beta_1}\exp(\beta_1x)+d_{\beta_2}\exp(\beta_2x)\}
  \ > \ 0 \, ,
\end{equation}
where the nonnegativity follows from considering the point $x=a$. 
Since the set $X$ is bounded from below and $\beta_1 < \beta_2 < \alpha_2$, the expression 
in the argument for the infimum in~\eqref{eq:c1} is finite. 
Hence, the signomial $f$ in~\eqref{eq:f5}
is nonnegative
with a zero on $\cl X$.

Now assume that $f$ has an $X$-SAGE decomposition 
$f=f_{\beta_1}+f_{\beta_2}$
with 
$f_{\beta_i} = c_1^{(\beta_i)}
  +c_2^{(\beta_i)} \e^{\alpha_2}+ d_{\beta_i} \e^{\beta_i}
$
and coefficients $c_1^{(\beta_i)}$, $c_2^{(\beta_i)} > 0$,
$1 \le i \le 2$.
Regardless of the specific values of $c_i^{(\beta_1)}$,
the function $f_{\beta_1}$ is monotone increasing over 
$X$, because its derivative is strictly positive.
Hence, the signomial $f_{\beta_1}$ must
attain its minimal value on $\cl X$ at the point $a$. 
Since $f$ has a zero on $\cl X$,
$f_{\beta_1}$ must also have a zero on $\cl X$
and thus $f_{\beta_1}(a) = 0$.
Hence, $f_{\beta_2}$ must also satisfy $f_{\beta_2}(a) = 0$.
However, since
\begin{align*}
	&f_{\beta_2}(a)>f_{\beta_2}(b)
	\ \text{ iff} &c_2^{(\beta_2)}(\exp(\alpha_2a)-\exp(\alpha_2b))+d_{\beta_2}(\exp(\beta_2a)-\exp(\beta_2b))>0,
\end{align*} 
we obtain 
$0 = f_{\beta_2}(a)>f_{\beta_2}(b)$, because every possible choice for $c_2^{(\beta_2)}$ must satisfy $c_2^{(\beta_2)} \le c_2$.
Hence, $f_{\beta_2}$ cannot be nonnegative.
This contradiction shows that $f$ cannot have an
$X$-SAGE-decomposition.
\end{proof}

An open question is whether and how Theorem~\ref{onedim} can be generalized to the multivariate case. 
The following counterexample shows that in general it does not suffice that the negative support points are contained in the same polyhedral cell of the natural polyhedral subdivision of $\bR^n$, 
which is given by the facet-defining hyperplanes of the convex hull of $\cA$.

\begin{bsp}
	\label{ex:two-dim2}
	Let
	\[ f \ = \ \exp(4x+y)-10 \exp(3x)+37 \exp(2x+y)-60 \exp(x)+36\]
	and $X=\R_+^2$. Then $X$ and $\varphi(X)$ are convex.
	The support of $f$ is depicted in Figure~\ref{fi:newton-polytope2}.
\ifpictures
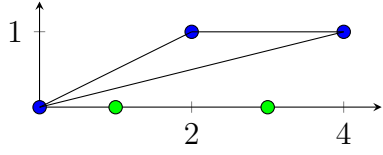
\begin{figure}[t]
\[
	\begin{tikzpicture}
		\begin{axis}[
			x=1cm,y=1cm,
			axis lines=middle,
			xmin=0,
			xmax=4.5,
			ymin=0,
			ymax=1.4,
			ytick={1}]
		\end{axis}
		\draw [fill=blue] (4,1) circle (2.5pt);
		\draw [fill=green] (3,0) circle (2.5pt);
		\draw [fill=blue] (2,1) circle (2.5pt);
		\draw [fill=green] (1,0) circle (2.5pt);
		\draw [fill=blue] (0,0) circle (2.5pt);
		\draw (0,0) -- (4,1);
		\draw (4,1) -- (2,1);
		\draw (0,0) -- (2,1);
	\end{tikzpicture}
\]

\caption{The support points of $f$ in Example~\ref{ex:two-dim2}}
\label{fi:newton-polytope2}
\end{figure}
\fi
 	For $y=0$ we obtain
	\[
	f_{y=0} = (\exp(x)-2)^2 (\exp(x)-3)^2 \, ,
	\]
	so that $f_{y=0}$ has double roots at $\ln(2)$ and at $\ln(3)$. For fixed $x$, the function $f$ is strictly monotonically increasing in $y$, so the minimum of $f$ on $X$ must be located on the nonnegative $x$-axis. Hence, the two zeroes of $f$ are the global minimizers. Since $f$ has two isolated roots, it cannot be a SAGE signomial.
\end{bsp}
Finally, we provide a formulation of Theorem~\ref{th:main} in terms of polynomials. Let 
\begin{equation}\label{eq:polynomial}
	p \ = \ \sum_{\balpha \in \sA}c_{\balpha}\bx^{\balpha}+\sum_{\bbeta \in \sB}d_{\bbeta }\bx^{\bbeta}
\end{equation}
where $\sA\subseteq \bN^n$ is an affinely independent finite set and $\sB \subseteq \bN^n$ is finite and disjoint from $\sA$. Furthermore, let 
$c_{\balpha} \in \R$, $d_{\bbeta}<0$ and $X$ be a logarithmically convex subset of $\bR^n_{>0}$.
We say that $X$ is \emph{$\mathcal{A}$-monomial convex} if there exists some $\obalpha \in \cA$ such that the image
of $X$ under the map $\psi:\by \mapsto (\by^{\balpha - \obalpha})_{\balpha \in \cA \setminus \{ \obalpha\} \} }$ is convex as well.
If we know a description of the image $Y$ of $X$ under $\psi$,
then the optimization problem to minimize $f$ 
 over $X$ can be formulated using exactly the 
 convex program~\eqref{eq:convex-program1},
 where $n=|\cA \setminus \{0\}|$ and 
 $\blambda^{(\beta)} = (\blambda_{\balpha}^{(\beta)})_{\balpha \in \cA}$
 and the $(\blambda_{\balpha}^{(\beta)})$ are the barycentric
 coordinates given by
 $\bbeta = \sum_{\balpha \in \mathcal{A}} \lambda^{(\bbeta)}_{\balpha}$.

These constraints coming from the power cones are convex constraints. 
In the language of polynomials, Theorem~\ref{th:main} can be stated 
as follows.
\begin{coro}
	\label{co:polyversion}
	Let $\sA\subseteq \bN^n$ be the vertex set of a simplex, $\sB\subseteq \conv(\sA)\setminus \sA$ finite and
	$X \subseteq \R^n_{>0}$ be $\mathcal{A}$-monomial convex. 
	Then the 
    polynomial $p$ in~\eqref{eq:polynomial} with $c_{\balpha} \in \R$ and $d_{\bbeta} < 0$
    is nonnegative on $X$ if and only if 
	$p$ is has a decomposition of the form
    \[
      p \ = \ \sum_{\bbeta \in \mathcal{B}} p_{\bbeta} \;
      \text{ with } p_{\bbeta}(\bx) = \sum_{\balpha \in \cA} c^{(\bbeta)}_{\balpha} 
      \bx^{\balpha} + 
      d_{\bbeta} \bx^{\bbeta}
\]
with coefficients $c_{\balpha}^{(\bbeta)} \in \R$
for all $\bbeta \in \mathcal{B}$ such that each $p_{\bbeta}$ is nonnegative
over $X$.

    %$p_{\SAGE}^{\Poly}\geq 0$.
\end{coro}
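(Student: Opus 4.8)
The plan is to deduce Corollary~\ref{co:polyversion} from Theorem~\ref{th:main} by the standard dictionary between polynomials on the positive orthant and signomials, so that essentially no new work is needed beyond verifying that the hypotheses translate correctly. First I would make the substitution $y_i = \exp(x_i)$, that is, $\bx = (\ln y_1, \dots, \ln y_n)^T$, which gives a bijection between $\R^n$ and $\R_{>0}^n$. Under this substitution, a monomial $\bx^{\balpha}$ with $\balpha \in \bN^n$ becomes $\by^{\balpha} = \prod_i y_i^{\alpha_i} = \exp \langle \balpha, \bx \rangle = \e^{\balpha}(\bx)$, so the polynomial $p$ in~\eqref{eq:polynomial} corresponds exactly to a signomial $f$ of the form~\eqref{eq:signomial:main:theorem} with the same coefficients $c_{\balpha}$ and $d_{\bbeta}$. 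Since $X \subseteq \R_{>0}^n$ is logarithmically convex, its preimage $\widetilde{X} := \{\bx \in \R^n \mid (\exp(x_1), \dots, \exp(x_n))^T \in X\}$ is a well-defined subset of $\R^n$, and $p$ is nonnegative on $X$ if and only if $f$ is nonnegative on $\widetilde X$.

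Next I would check that $\mathcal{A}$-monomial convexity of $X$ is exactly $\mathcal{A}$-convexity of $\widetilde X$. Comparing the two definitions: $X$ is $\mathcal{A}$-monomial convex means there is $\obalpha \in \cA$ with $\psi(X) = \{(\by^{\balpha-\obalpha})_{\balpha \in \cA \setminus \{\obalpha\}} \mid \by \in X\}$ convex, whereas $\widetilde X$ is $\mathcal{A}$-convex means there is $\obalpha \in \cA$ with $\varphi(\widetilde X) = \{(\exp\langle \balpha - \obalpha, \bx\rangle)_{\balpha \in \cA \setminus \{\obalpha\}} \mid \bx \in \widetilde X\}$ convex. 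Because $\exp \langle \balpha-\obalpha, \bx\rangle = \by^{\balpha-\obalpha}$ when $y_i = \exp(x_i)$, the maps $\varphi \circ (\ln\text{-coordinatewise})$ and $\psi$ have the same image, so $\varphi(\widetilde X) = \psi(X)$ and the two convexity conditions coincide for the same choice of $\obalpha$. Thus $X$ is $\mathcal{A}$-monomial convex precisely when $\widetilde X$ is $\mathcal{A}$-convex, and the hypotheses of Theorem~\ref{th:main} are met for $f$ on $\widetilde X$.

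Finally I would invoke Theorem~\ref{th:main}: $f$ is nonnegative on $\widetilde X$ if and only if $f = \sum_{\bbeta \in \mathcal{B}} f_{\bbeta}$ with each $f_{\bbeta}(\bx) = \sum_{\balpha \in \cA} c_{\balpha}^{(\bbeta)} \e^{\balpha} + d_{\bbeta} \e^{\bbeta}$ nonnegative on $\widetilde X$ and $c_{\balpha}^{(\bbeta)} \in \R$. Transporting this decomposition back through $y_i = \exp(x_i)$ term-by-term, each $f_{\bbeta}$ corresponds to a polynomial $p_{\bbeta}(\bx) = \sum_{\balpha \in \cA} c_{\balpha}^{(\bbeta)} \bx^{\balpha} + d_{\bbeta} \bx^{\bbeta}$ with the same coefficients, the identity $f = \sum_{\bbeta} f_{\bbeta}$ becomes $p = \sum_{\bbeta} p_{\bbeta}$, and nonnegativity of $f_{\bbeta}$ on $\widetilde X$ is equivalent to nonnegativity of $p_{\bbeta}$ on $X$. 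This yields exactly the claimed decomposition.

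The only mild subtlety, and the place I would be careful rather than the place I expect genuine difficulty, is the correspondence of the convexity hypotheses. Since Theorem~\ref{th:main} is stated with the choice $\obalpha$ free and the substitution $\bx \mapsto \by$ commutes with the linear functional $\langle \balpha - \obalpha, \cdot\rangle$ turning into the monomial exponent $\by^{\balpha-\obalpha}$, the identification $\varphi(\widetilde X) = \psi(X)$ is immediate and the argument is purely a change of variables. I do not anticipate any real obstacle; the content is entirely carried by Theorem~\ref{th:main}, and the corollary is a faithful restatement in polynomial language.
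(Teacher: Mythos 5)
Your proposal is correct and matches the paper's treatment: the corollary is obtained exactly by the change of variables $y_i = \exp(x_i)$, under which the polynomial becomes a signomial of the form~\eqref{eq:signomial:main:theorem}, $\mathcal{A}$-monomial convexity of $X$ becomes $\mathcal{A}$-convexity of its logarithmic preimage, and Theorem~\ref{th:main} is applied directly. The paper offers no further argument beyond this dictionary, so there is nothing to add.
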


\section{Outlook}

In this work, we have provided an exact nonnegativity characterization for a class of signomials whose
Newton polytope is a simplex. It remains a general open question to extend our and other existing classes
of signomials regarding nonnegativity results.
We mention that the papers \cite{Forsgaard:deWolff:BoundarySONCCone,mcw-newton-poly} and, in the symmetric setting
\cite{mrt-2025}, study other classes of polynomials or signomials with respect to exactness 	in the unconstrained case. One specific
question is whether those results also extend to constrained settings.

\section*{Acknowledgments}

The authors were supported by the DFG grants 539867386 (G.A.)
and 539847176 (J.E., T.T., T.d.W.)
in the context of the 
Priority Program SPP 2458 ``Combinatorial Synergies''.

\bibliography{bib-exactness}
\bibliographystyle{plain}

\end{document}